\renewcommand{\thesubfigure}{Figure \arabic{figure}\alph{subfigure}}
\makeatletter \renewcommand{\@thesubfigure}{\thesubfigure \space}
\renewcommand{\p@subfigure}{} \makeatother
\theoremstyle{plain}
\newtheorem{thm}{Theorem}[section]
\newtheorem{prop}[thm]{Proposition}
\newtheorem{cor}[thm]{Corollary}
\newtheorem{lem}[thm]{Lemma}
\theoremstyle{definition}
\newtheorem{defn}[thm]{Definition}
\theoremstyle{remark}
\newtheorem*{hypo*}{\textbf{Hypothesis}}
\newtheorem*{claim*}{Claim}
\newcommand{\tabincell}[2]{\begin{tabular}{@{}#1@{}}#2\end{tabular}}
\numberwithin{equation}{section}
\def\@biblabel#1{#1}
\newenvironment{demo*}{\vspace{3mm}\noindent{\bf Proof.}}{\hfill $\Box$ \vspace{3mm}}
\begin{document}
\title{Stochastic Stability of Monotone Dynamical Systems. I. The Irreducible Cooperative Systems}

\setlength{\baselineskip}{16pt}

\author{
Jifa Jiang\thanks{Supported by the National Natural Science Foundation of China (No. 12571171 and 12171321).} 
\\[2mm]
College of Mathematics and Statistics\\
Henan Normal University\\
Xinxiang, Henan, 453007, P. R. China
\\[2mm]
Xi Sheng and Yi Wang\thanks{Supported by the National Key R\&D Program of China (No.2024YFA1013603, 2024YFA1013600), the National Natural Science Foundation of China (No.12331006), and the Strategic Priority Research Program of CAS (No.XDB0900100).} 
\\[2mm]
School of Mathematical Sciences\\
University of Science and Technology of China\\
Hefei, Anhui, 230026, P. R. China
}

\date{}
\maketitle

\begin{abstract}
The current series of papers is concerned with stochastic stability of monotone dynamical systems by identifying the basic dynamical units that can survive in the presence of noise interference.
In the first of the series, for the cooperative and irreducible systems, we will establish the stochastic stability of a dynamical order, that is, the zero-noise limit of stochastic perturbations will be concentrated on a simply ordered set consisting of Lyapunov stable equilibria.
In particular, we utilize the Freidlin--Wentzell large deviation theory to gauge the rare probability in the vicinity of unordered chain-transitive invariant set on a nonmonotone manifold.
We further apply our theoretic results to the stochastic stability of classical positive feedback systems by showing that the zero-noise limit is a convex combination of the Dirac measures on a finite number of asymptotically stable equilibria although such system may possess nontrivial periodic orbits.
\par
\textbf{Keywords}: Stochastic stability; Freidlin--Wentzell's large deviations principle; Cooperative systems; Zero-noise limit; Lyapunov stable equilibria.
\end{abstract}

\section{Introduction}
Monotone dynamical systems, stemming from the groundbreaking work of M. W. Hirsch, are characterized by the presence of a comparison principle that is aligned with a closed partial order relation (induced by a convex cone) within the state space.
Such systems, due to Smale \cite{S76}, may harbor orbits that exhibit arbitrary complicated behavior, including horseshoes (see also in e.g., \cite{Dancer,Smith1,Smith2,W-D-10,W-J-01,W-Y-22} and references therein). On the other hand,
the signature results of monotone dynamical systems indicate that the forward orbit of almost every (i.e., generic/prevalent) initial
state converges in both the topological sense (\!\!\cite{Polacik89,P-T-91,Hirsch,Hirsch2,H-P-93,S-T-91,Terescak94,W-Y-20,W-Y-22-2}) and measure-theoretic sense (\!\!\cite{E-H-S-08,WYZ-22,WYZ-24}).
Over the past decades, extensive research has been conducted in this field, and its applications have been continuously expanding. For more detailed information, one may refer to the monographs \cite{H-84,Smith,Hirsch-05,S-17,P-02,SY-98,Cheban-24} as well as their cited literature.

Numerous mathematical models, originating from differential and difference equations, can give rise to monotone dynamical systems (see \cite{Smith,S-17,Hirsch-05,Zhao-17}). Given that these models are frequently prone to noise-induced perturbations, the generated monotone dynamical systems are inevitably affected by a wide range of irregularly occurring phenomena. As a result, the study of how noise perturbations affect the dynamics of monotone systems has become a fundamental concern, critical to both theoretical modeling and the practical investigation of system behavior.

Prior research has addressed this dynamical issue, mainly concentrating on a trajectory-based approach that is predominantly utilized within the framework of random dynamical systems. One may refer to \cite{Arnold-Chueshov,Chueshov,Chueshov1,Flandoli} for recent developments in the theory of monotone (order-preserving) random systems, as well as the application to the long-time behavior of random and stochastic differential equations. For instance, Arnold and Chueshov \cite{Arnold-Chueshov} studied certain simplification in the long-time dynamics for random monotone systems. Synchronization by noise was subsequently observed in random monotone systems (see Flandoli et al.\cite{Flandoli}, Chueshov and Scheutzow et al.\cite{Chueshov1}), which indicates that there is a certain point attractor consisting of a single random point and in this sense the random dynamics are asymptotically globally stable. Thus, it becomes clear that the long-time behavior of a general monotone random systems is notably simpler when compared to its deterministic counterpart. In other words, the trajectory-based approach might not yield much insights into the noise-induced perturbations within monotone dynamical systems.

To our current understanding, in the realm of general noise perturbations, the {\it distribution-based approach} stands out as an especially potent strategy. It has become an increasingly essential instrument, providing greater utility than the trajectory-based method. A fundamental problem in the distribution-based approach is to identify the basic statistic dynamical units (for instance, certain invariant measures and their concentrations) which are {\it stochastically stable}, that is, they can ``survive" in the presence of noise interference. Following L. S. Young \cite{Young,Young1}, we called such invariant measures as {\it zero-noise limits of random perturbations of dynamical systems}, that is, zero-noise limits are the weak limits of $\mu^\varepsilon$ as $\varepsilon\to 0$. Here, $\mu^\varepsilon$ denotes the probability measure that is invariant under the Markov process corresponding to noise level $\varepsilon$. These ideas go back to Kolmogorov (see \cite[p.838]{Sinai}).

The importance of zero-noise limits is clear:
assuming that the real world is perpetually slightly noisy, they represent measures that are the truly observable invariant measures. Elucidating the precise locations for concentration of zero-noise limit is essential for the discernment of invariant sets exhibiting stochastic stability.
For some pioneer works from this point of view, we refer to \cite{Khasminskii,Nevel} for stochastic stability of flows on a cycle or a 2-torus, to \cite{Huang2016,Huang2018,Ruelle,Zeeman} for stochastic stability of (quasi)attractors, to \cite{Kifer,Kifer1} for stochastic stability of hyperbolic or axiom A systems, and to \cite{Young,Young2,Young1} for SRB measures as zero-noise limits with positive Lyapunov exponents, etc. Moreover, we refer to Freidlin and Wentzell \cite{FW1,FW2} for the stochastic stability of systems whose long-time behavior can be expressed as a finite union of certain basic sets, termed equivalence classes (with respect to the quasipotential) in their works.

This series of our work will adopt the distribution-based approach to study the impact of white noises on the dynamics of monotone dynamical systems. Since zero-noise limits mirror the stochastically stable behavior, one cannot solely rely on the analysis of the long-time behavior of individual orbits. Accordingly, due to its essential differences from deterministic monotone dynamical systems, much less is known in this direction comparing with cases using the trajectory-based approach.

In the present paper, we mainly focus on the zero-noise limits for the stochastic perturbation of a prototypical example of monotone dynamical systems, that is, the {\it cooperative} ordinary differential equations (see \cite{Hirsch}):
\begin{equation}\label{unpersys}
	\frac{\mathrm dx}{\mathrm dt}=b(x), \quad x(0)=x\in \mathbb{R}^r,
\end{equation}
where $b:\mathbb{R}^r\to \mathbb{R}^r$ is continuously differentiable. Cooperation means that an increase in one component of $x(t)$ causes an increase of all the other components, modeled by the assumption that $\partial b_i/\partial x_j\ge 0$ for $i\ne j$. Moreover, we further assume that the Jacobian matrices $Db(x)$ are {\it irreducible}, which means that each component of $x(t)$ directly or indirectly affect all the others. Such systems occur in many biological, chemical, physical and economic models.

For the stochastic perturbation of system \eqref{unpersys}, we consider the stochastic equations
\begin{equation}\label{itodff}
	\mathrm dX^{\varepsilon}_t=b(X^{\varepsilon}_t)\mathrm dt+\varepsilon\sigma(X^{\varepsilon}_t)\mathrm dW_t, \quad X^{\varepsilon}_0=x\in \mathbb{R}^r,
\end{equation}
where the perturbation parameter $\varepsilon$ is small, and $W_t=(W^{1}_t,\cdots,W^{r}_t)^T$ is a standard $r$-dimensional Wiener process.\! The diffusion matrix $\sigma$ is locally Lipschitz continuous and \textit{non-degenerate} in the sense that $a(x)\triangleq \sigma(x)\sigma^T(x)$ is positive definite for any $x\in \mathbb{R}^r$.
Here, $^{T}$ denotes transpose.

For a general vector field $b(x)$, many prior studies on the stochastic stability of system \eqref{unpersys} have primarily focused on investigating the asymptotic distribution of zero-noise limits near global/local attractors (or repellers) (\!\!\cite{Chen1,Chen2,Huang2018,XCJ,FW1,FW2,Huang2016,H-15,Ji1,Ji2,Ji3,H-15-2,H-15-3,H-15-4,L-Y-1,L-Y-2}), as well as their applications to various dynamical scenarios. Among them, Freidlin and Wentzell \cite{FW1,FW2} introduced the concept known as quasipotential, which is a natural extension of potential difference to non-equilibrium systems. Subsequently, they \cite{FW1,FW2} developed the large deviations theory, enabling the use of quasipotential to estimate the statistics of the likely transition paths during the analysis of rare events in stochastic dynamics. Utilizing this framework, they studied the equivalent classes with respect to the quasipotential, and established the stochastic stability of essentially a finite number of equilibria or periodic orbits. The common feature of all the above mentioned works is to deal with the stochastic stability of the pre-designated dynamical objectives, such as attractors/repellers, the equivalent classes (with respect to the quasipotential), etc.

Another different methodology of works has gone toward exploring the stochastic stability under assumptions about the large scale structure of the system. Such viewpoint has the advantage that it does not require a prior information to select special pre-designated dynamical objectives of interest. One may refer to the works in this direction for the gradient systems \cite{Huang2016,Hwang} (i.e., $b(x)$ is the negative of the gradient of a potential function), for axiom A systems \cite{Kifer,Young2}, and so on. However, monotone dynamical systems are typically neither gradient nor axiom A. Moreover, their long-term behavior may not be represented as a finite union of equivalent classes. This is due to the presence of a nonmonotone manifold (see Remark \hyperref[rmk-4.2]{4.2}), where the system can host infinitely many equivalent classes that exhibit complicated dynamics (c.f. \cite{S76,Smith1,Smith2,W-Y-22}).

We will make an attempt to contribute by using structural ideas to analyze the stochastic stability of monotone dynamical systems.
The basic theme of this paper is devoted to identifying the concentration of zero-noise limit of the cooperative and irreducible system (\ref{unpersys}). We will establish the stochastic stability of a dynamical order for the system \eqref{unpersys}, where the dynamical order is referred to a one dimensional simply ordered topological structure in the context (c.f. Chow et al. \cite{Chow}). This notion is also known as $p$-arcs, which was introduced in Mierczy\'{n}ski \cite{Janusz} (see also \cite{Hirsch1}). More precisely, the zero-noise limits will be concentrated on a simply ordered subset consisting of Lyapunov stable equilibria (see Theorem \ref{Mthm}). If, in addition, the system \eqref{unpersys} is either analytic or possesses only finitely many equilibria then any zero-noise limit is a convex combination of Dirac measures of a finite number of asymptotically stable equilibria (see Corollary \ref{Mthm-c} and Remark \hyperref[rmk-3.2]{3.2}). To our knowledge, this work seems to be the first investigation into stochastic stability for systems that may exhibit potentially infinite equivalence classes.

Our approach is motivated by the fusion of ideas from the work of Bena\"im and Hirsch \cite{Benaim} for stochastic approximation algorithms and the previous works of one of the present authors \cite{Chen2,XCJ}. According to large deviation theory, the transition dynamics is completely characterized by the quasipotential, whose landscape gives an intuitive description of the essential dynamical features. However, computing quasipotentials is very challenging, especially when the system is high dimensional, or when a global landscape is sought. This is exactly the situation we encounter when addressing the stochastic stability of monotone dynamical systems. To overcome such difficulties, one of our critical insights in the current work is to show that, for any unordered chain transitive set $K$ on a nonmonotone manifold (see Remark \hyperref[rmk-4.2]{4.2}), the quasipotential from $K$ to its upper (resp. lower) dual attractor consistently equals zero (see Proposition \ref{V-y-A=0}). Based on this, together with the arguments in \cite{XCJ}, we directly utilize the Freidlin--Wentzell theory (rather than the indirect discrete-time approach developed in \cite{Benaim}) to gauge the rare probability in the vicinity of unordered invariant sets; and hence, the stochastic stability of a dynamical order for the system \eqref{unpersys} is thus obtained.

Meanwhile, we further demonstrate our general framework by applying our results to the classical positive feedback models subjected to stochastic perturbation.
We show that their zero-noise limit is a convex combination of the Dirac measures on a finite number of asymptotically stable equilibria although such system may possess nontrivial periodic orbits.

This paper is organized as follows:
In Section \ref{preliminary}, we introduce the notations and preliminaries used throughout the paper.
Our main results will be presented in Section \ref{main-result}.
In Section \ref{quasi-mono}, we investigate the quasipotential for the monotone flow generated by system \eqref{unpersys}.
We give the proof of our main theorem in Section \ref{proof-of-main-results}.
Finally, in Section \ref{app}, we apply our main results to obtain the stochastic stability of the positive feedback systems.

\section{Notations and Preliminaries}\label{preliminary}

Let $(\mathbb{R}^r, \left|\cdot\right|)$ be the $r$-dimensional Euclidean space and $\langle\cdot,\cdot\rangle$ be the inner product in $\mathbb{R}^r$ defined as $\langle u,v \rangle=\sum_{i=1}^{r}u_iv_i$ for any $u,v\in\mathbb{R}^r$.
For each $x, y\in \mathbb{R}^r$, a partial order on $\mathbb{R}^r$ is given by $x\le y$ (resp. $x\ll y$) if and only if $x_i\le y_i$ (resp. $x_i< y_i$) for $i=1, \cdots, r$. We write $x<y$ if $x\le y$ and $x\neq y$.
A set $S$ is called {\it unordered} if no two of its points are related by $<$.
$S$ is called \textit{ordered} (resp. \textit{strongly ordered}) if any two points in $S$ are related by $``\leq"$ (resp. $``\ll"$).
In particular, a single point is referred to as ordered.
For $A,B\subset \mathbb{R}^r$, we write $A\leq B$ (resp. $A<B$, $A\ll B$) if $a\leq b$ (resp. $a<b$, $a\ll b$) for all $a\in A$ and $b\in B$.
Given $x,y\in\mathbb{R}^r$, the set $[[x, y]] = \{z\in\mathbb{R}^r:x\ll z\ll y\}$ is called an \textit{open order interval}, and we write \textit{closed order interval} $[x, y] =\{z\in\mathbb{R}^r:x\leq z\leq y\}$. In
particular, we write $[[x,+\infty]]=\{y\in\mathbb{R}^r: x\ll y\}$ and $[[-\infty,x]]=\{y\in\mathbb{R}^r: y\ll x\}$.
The \textit{supremum} $\sup S$ of a subset $S\subset\mathbb{R}^r$, if it exists, is the minimal point $a$ such that $a\geq S$. The \textit{infimum} $\inf S$ is defined dually.

The \textit{solution flow} of deterministic system (\ref{unpersys}) starting at $x$ is denoted by $\Phi_{t}(x)$. Throughout this paper, we always assume that system \eqref{unpersys} is forward complete, that is, the domain of any of its nonextendible solution flow $\Phi_t$ contains $[0,+\infty)$.
The \textit{positive orbit} of $x$ is denoted by $\mathcal{O}^+(x) = \left\{\Phi_{t}(x):t\geq0\right\}$.
An \textit{equilibrium} $p\in\mathbb{R}^r$ is a point for which $\mathcal{O}^+(p)=\left\{p\right\}$.
The set of all equilibria of \eqref{unpersys} is denoted by $\mathcal{E}$.
An equilibrium $p\in\mathcal{E}$ is called \textit{Lyapunov stable} if for each neighborhood $V$ of $p$ there exists a neighborhood $V_{1}\subset V$ of $p$ such that $\Phi_{t}V_1\subset V$ for all $t \geq 0$;
and called \textit{asymptotically stable} if there exists a neighborhood $N$ of $p$ such that $\lim_{t\to\infty}{\rm dist}(\Phi_t(x),p)=0$ uniformly for $x\in N$.
The $\omega$-limit set $\omega(x)$ of $x$ is defined by $\omega(x)=\bigcap_{\tau\geq0}\overline{\mathcal{O}^+(\Phi_\tau(x))}$, where the closure (resp. boundary) of a set $S\subset\mathbb{R}^r$ is denoted by $\overline{S}$ (resp. $\partial S$).
A set $D\subset\mathbb{R}^r$ is {\it invariant} (resp. {\it positively invariant}) if $\Phi_{t}D= D$ (resp. $\Phi_{t}D\subset D$) for all $t\geq 0$.
A non-empty, compact and invariant subset $\mathcal{A}$ is called to be an {\it attractor} if there is a {\it fundamental neighborhood} $N$ of $\mathcal{A}$ such that
$\displaystyle\lim_{t\rightarrow \infty}
{\rm dist}\big(\Phi_{t}(x),\mathcal{A}\big)=0$ uniformly in $x\in N$. The \textit{basin of attraction} of $\mathcal{A}$ is the open set
$${\rm Basin}(\mathcal{A})=\Big\{x\in\mathbb{R}^r:\lim_{t\to\infty}{\rm dist}(\Phi_t(x),\mathcal{A})=0\Big\}.$$
The system \eqref{unpersys} is called \textit{dissipative} if there exists an attractor $\Lambda$ whose basin of attraction is the space $\mathbb{R}^r$ (where $\Lambda$ is referred to as the global attractor).
For $x\in\mathbb{R}^r,\delta>0$ and $U\subset\mathbb{R}^r$, we write $B_\delta(x)=\{y\in\mathbb{R}^r:|y-x|<\delta\}$
and $U_{\delta}=\{y\in\mathbb{R}^r:{\rm dist}(y,U)<\delta\}$, where ${\rm dist}(y,U):=\inf_{x\in U}|x-y|$.

For $u,v\in\mathbb{R}^r$, we say \textit{$u$ chains to $v$}, written $u\overset{\Phi}{\rightsquigarrow} v$, if for every $T>0$ and $\varepsilon> 0$ there exist an integer $n>0$, $t_1,\cdots,t_n\geq T$, and a finite sequence in $\mathbb{R}^r$ of the form $\{u=y_0,\cdots,y_n=v\}$ such that $|\Phi_{t_i}(y_{i-1})-y_{i}|<\varepsilon$ for $i= 1,\cdots,n$.
If $u\overset{\Phi}{\rightsquigarrow} v$ and $v\overset{\Phi}{\rightsquigarrow} u$ then $u$ and $v$ are \textit{chain equivalent}, written $u\overset{\Phi}{\approx} v$.
A compact invariant set $K$ is called \textit{chain transitive} (resp. \textit{externally chain transitive}) if for any two points $u,v\in K$ one has $u\overset{\Phi|_K}{\approx} v$ (resp. $u\overset{\Phi}{\approx} v$).

By a \textit{$p$-arc} (see Mierczy\'{n}ski \cite[p.1480]{Janusz}), we mean a strongly ordered invariant set $J \subset \mathbb{R}^r$ such that there exists an increasing $C^1$-smooth function $h$ from a compact interval $I := [0, 1] \subset \mathbb{R}$ onto $J$, where $h$ is increasing if $h(s_1) \ll h(s_2)$ whenever $0 \leq s_1 < s_2 \leq 1$. In addition, if $J\subset \mathcal{E}$, then $J$ is called a \textit{stationary $p$-arc}.

A flow $\Phi_t$ is called \textit{monotone} if $\Phi_t(x) \leq \Phi_t(y)$ provided $x\leq y$ for any $t >0$; and called \textit{strongly monotone} if $\Phi_t(x)\ll\Phi_t(y)$ provided that $x<y$ for all $t > 0$.
As one may know, the solution flow $\Phi_t$ of the cooperative and irreducible system \eqref{unpersys} is strongly monotone (see \cite{Smith,Hirsch-05}). Here, system \eqref{unpersys} is called \textit{cooperative} if $\partial b_i/\partial x_j\ge 0$ for $i\ne j$, and \textit{irreducible} if its Jacobian matrix \( Db(x) \) is irreducible for each \( x \in \mathbb{R}^r \).	Recall that a \( (r \times r) \) matrix \( A = (a_{ij}) \) is irreducible if for every nonempty, proper subset \( I \) of the set \( N = \{1,2,\ldots,r\} \), there is an \( i \in I \) and \( j \in N\backslash I \) such that \( a_{ij} \neq 0 \).

\vspace{2ex}
We now present some notations in the large deviation theory, which are useful in dealing with stationary measure asymptotics for small noise Markov processes (see, e.g., \cite{FW1, FW2, Kifer} and the references therein).

Let us consider a complete probability space $(\Omega,\mathcal{F},\{\mathcal{F}_t\}_{t\geq0},\mathbb{P})$ endowed with a filtration $\{\mathcal{F}_t\}_{t\geq0}$ that satisfies the usual conditions.
Denote by $\mathcal{P}(\mathbb{R}^r)$ the set of all probability measures on $\mathbb{R}^r$.
Let $\mu\in \mathcal{P}(\mathbb{R}^r)$, we say that a sequence $\{\mu_n\}\subset \mathcal{P}(\mathbb{R}^r)$ \textit{converges weakly} to $\mu$ if for any bounded continuous function $f$ on $\mathbb{R}^r$ one has $\lim_{n\to\infty}\int_{\mathbb{R}^r} fd\mu_n=\int_{\mathbb{R}^r} fd\mu.$
We say that $\Pi\subset \mathcal{P}(\mathbb{R}^r)$ is \textit{tight} if for each $\eta > 0$ there exists a compact set $K\subset\mathbb{R}^r$ such that $\mu(K) > 1 -\eta$ for every $\mu\in\Pi$.
The \textit{support} of $\mu\in\mathcal{P}(\mathbb{R}^r)$, denoted by ${\rm supp}(\mu)$, is the smallest closed set whose complement has measure $0$ under $\mu$.

Fix $T>0$,
let ${\bf C}_{T}=C([0,T],\mathbb{R}^r)$ (resp. ${\bf AC}_T=AC([0,T],\mathbb{R}^r)$) denote the set of continuous (resp. absolutely continuous) functions on $[0,T]$ with values in $\mathbb{R}^r$.
For $\varphi,\psi\in{\bf C}_{T}$ and $W\subset {\bf C}_{T}$, define $\rho_{T}(\varphi,\psi)=\sup\limits_{0\leq t\leq T}|\varphi(t)-\psi(t)|$ and $\rho_{T}(\varphi,W)=\inf\limits_{\phi\in W}\rho_{T}(\varphi,\phi)$. Clearly, $(\textbf{C}_T, \rho_T)$ is a complete metric space.
For system \eqref{itodff}, we define the \textit{rate energy} on ${\bf C}_{T}$:
\begin{equation*}\label{actfal}
	\mathcal{S}_{T}(\varphi)=\left\{
	\begin{array}{ll}
		\int_{0}^{T}L(\varphi(t),\dot{\varphi}(t)){\rm d}t, & \hbox{if $\varphi\in{\bf AC}_T$,} \vspace{2mm}\\
		\infty, & \hbox{otherwise,}
	\end{array}
	\right.
\end{equation*}
where
\begin{equation*}
L(u,\beta)=\frac{1}{2}\big(\beta-b(u)\big)^T a^{-1}(u)\big(\beta-b(u)\big)\quad \text{ for any } u,\beta\in\mathbb{R}^r,
\end{equation*}
and $\dot{\varphi}$ means the derivative of $\varphi$.
By virtue of lemma due to Riesz and Nagy (c.f. \cite[Chapter 41]{R-SN}),
$\mathcal{S}_T$ is \textit{lower semi-continuous} in ${\bf C}_T$ for any $T\!>\!0$ in the sense that $\liminf_{\rho_T\!(\varphi_n,\varphi)\!\to\!0}\!\mathcal{S}_T(\varphi_n)\!\geq\!\mathcal{S}_T(\varphi)$ for any $\varphi\in {\bf C}_T$ (see \cite[p.61, Lemma 3.2.1(a)]{FW2}).
For each $H\subset \textbf{C}_T$, let $\mathcal{S}_T(H):=\inf\{\mathcal{S}_T(\varphi): \varphi\in H\}$ and $\mathcal{S}_T(\emptyset)=\infty$.
To emphasize the dependence of initial conditions $x$, we also introduce the notions
${\bf C}_{T}^{x}=\{\varphi\in{\bf C}_{T}:\varphi(0)=x\}$,
${\bf AC}_T^x=
\{\varphi\in{\bf AC}_T:\varphi(0)=x\}$ and the rate
function $\mathcal{S}_{T}^{x}$ on ${\bf C}_{T}^x$:
\begin{equation*}\label{actfalini}
	\mathcal{S}_{T}^{x}(\varphi)=\left\{
	\begin{array}{ll}
		\int_{0}^{T}L(\varphi(t),\dot{\varphi}(t)) {\rm d}t, & \hbox{if $\varphi\in{\bf AC}_T^x$,} \vspace{2mm}\\
		\infty, & \hbox{otherwise.}
	\end{array}
	\right.
\end{equation*}
To focus on those rare energy functions, we write \textit{the level set $\mathbb{F}_{T}^{x}(s)$} of $\mathcal{S}_{T}^{x}$ as
\begin{equation}
\mathbb{F}_{T}^{x}(s)=\{\varphi\in{\bf C}_{T}^x:\mathcal{S}_{T}^{x}(\varphi)\leq s\},\quad\text{ for }T>0, x\in\mathbb{R}^r \text{ and } s\geq 0.\tag*{}
\end{equation}

\phantomsection
\noindent\textbf{Remark 2.1.}\label{rmk-2.1} $\mathcal{S}_{T}^{x}(\varphi)=0$ if and only if $\varphi$ (up to time $T$) coincides with the solution of the deterministic system {\rm(\ref{unpersys})}.

\vspace{1ex}
In the following, we introduce the \textit{Linear Interpolation Function} of $x,y\in\mathbb{R}^r$ (abbr. ${\rm LIF}_{x,y}$) as
\begin{equation*}
	{\rm LIF}_{x,y}(t)=x+\frac{t}{|y-x|}(y-x),\quad\quad t\in\big[0,|y-x|\big].
\end{equation*}

\vspace{1ex}
We have the following useful lemma.
\begin{lem}\label{Vctin}
For each compact set $K\subset \mathbb{R}^r$, there is a $L>0$ such that
\begin{equation*}
\mathcal{S}^x_{|x-y|}({\rm LIF}_{x,y})\leq L|x-y|\quad\text{ for any }x,y\in K.
\end{equation*}
\end{lem}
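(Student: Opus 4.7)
The plan is to prove the bound by a direct computation: the LIF has constant unit-speed derivative, so the Lagrangian $L(u,\beta)$ needs only to be bounded uniformly along the segment joining $x$ to $y$, and the integral then picks up a factor of $|x-y|$ from the length of the time interval.

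First I would set $e=(y-x)/|y-x|$, so that $\varphi(t):={\rm LIF}_{x,y}(t)=x+te$ has $\dot\varphi(t)\equiv e$ with $|e|=1$, and the domain is $[0,T]$ with $T=|x-y|$. The image of $\varphi$ is the straight segment joining $x$ and $y$, which for $x,y\in K$ lies inside the compact convex hull $\widehat{K}:=\operatorname{conv}(K)$. Since $b$ is $C^1$ and $a=\sigma\sigma^T$ is continuous and positive definite (hence $a^{-1}$ exists and is continuous) on $\widehat K$, there exist constants $M_1,M_2>0$ such that $|b(u)|\le M_1$ and $\|a^{-1}(u)\|\le M_2$ for all $u\in\widehat K$. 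Then for every $t\in[0,T]$,
\begin{equation*}
L(\varphi(t),\dot\varphi(t))=\tfrac12\bigl(e-b(\varphi(t))\bigr)^{T}a^{-1}(\varphi(t))\bigl(e-b(\varphi(t))\bigr)\le \tfrac12 M_2(1+M_1)^2=:L.
\end{equation*}

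Integrating over $[0,T]$ then yields
\begin{equation*}
\mathcal{S}^x_{|x-y|}({\rm LIF}_{x,y})=\int_0^{T}L(\varphi(t),\dot\varphi(t))\,\mathrm dt\le L\cdot T=L|x-y|,
\end{equation*}
which is the desired estimate, with $L$ depending only on $K$ (through $M_1,M_2$), not on the particular pair $x,y\in K$.

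For the ``in particular'' claim, since $V(x,y)$ is by definition the infimum of $\mathcal{S}^x_T(\varphi)$ taken over all $T>0$ and all absolutely continuous paths $\varphi$ with $\varphi(0)=x$ and $\varphi(T)=y$, the admissible choice $T=|x-y|$ and $\varphi={\rm LIF}_{x,y}$ gives $V(x,y)\le \mathcal{S}^x_{|x-y|}({\rm LIF}_{x,y})\le L|x-y|$. There is no real obstacle here: the only thing to be slightly careful about is verifying that the straight segment lies in a compact set on which $b$ and $a^{-1}$ are bounded, which is immediate from the convexity of $\widehat K$ and non-degeneracy of $a$.
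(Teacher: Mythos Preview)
Your proof is correct and essentially the standard argument. The paper itself does not give a proof at all but simply cites \cite[Lemma~6.1.1]{FW2}, so you have in effect written out the details behind that citation: bound the Lagrangian uniformly on the convex hull of $K$ (using continuity of $b$ and of $a^{-1}$, the latter coming from non-degeneracy of $a$), then integrate over the time interval of length $|x-y|$.
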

\begin{proof}
	See \cite[Lemma 4.2.3]{FW2}.
\end{proof}

\vspace{1ex}
In the following, we present the hypothesis of the Freidlin--Wentzell uniform large deviations principle, which can be found in the literature (see, e.g.,\cite{FW1,FW2,DZ} and the references therein).
Let $\left\{X^{\varepsilon,x}_{\cdot}\right\}$, parameterized by $x\in\mathbb{R}^r$ and $\varepsilon>0$, be the solution of {\rm (\ref{itodff})}.
Let $\mathcal{K}$ be a collection of all compact subsets of $\mathbb{R}^r$.

\begin{defn}[Freidlin--Wentzell uniform large deviations principle over $\mathcal{K}$]\label{def-F-W-LDP}
Let $T>0$.
$\{X^{\varepsilon,x}_\cdot\}$ is said to satisfy a \textit{Freidlin--Wentzell uniform large deviations principle} with respect to the rate functions $\mathcal{S}_{T}^{x}$ uniformly over $\mathcal{K}$, if
\begin{enumerate}[\bf(C)]

\item\phantomsection\label{Compactness} (Compactness): For each $s<\infty$ and $K\in\mathcal{K}$, the set $\bigcup_{x\in K}\mathbb{F}_{T}^{x}(s)$ is compact in $\textbf{C}_T$;

\item[\bf(L)]\phantomsection\label{Lower} (Lower bound) : For each $\delta>0$, $\gamma>0$, $s_0>0$ and $K\in\mathcal{K}$, there exists $\varepsilon_0>0$ such that
\begin{equation}\label{ufldplbb}
	\mathbb{P}\{\rho_{T}(X^{\varepsilon,x}_\cdot,\varphi)<\delta\}\geq \exp\left\{-\frac{\mathcal{S}^x_{T}(\varphi)+\gamma}{\varepsilon^2}\right\}
\end{equation}
\noindent for any $\varepsilon\in(0,\varepsilon_0]$ and $\varphi\in\mathbb{F}_{T}^{x}(s_0)$ with $x\in K$;

\item[\bf(U)]\phantomsection\label{Upper} (Upper bound): For each $\delta>0$, $\gamma>0$, $s_0>0$ and $K\in\mathcal{K}$, there exists $\varepsilon_0>0$
such that
\begin{equation}\label{ufldpupbb}
	\mathbb{P}\{\rho_{T}(X^{\varepsilon,x}_\cdot,\mathbb{F}_T^x(s))\geq\delta\}\leq \exp\left\{-\frac{s-\gamma}{\varepsilon^2}\right\}
\end{equation}
for any $\varepsilon\in(0,\varepsilon_0]$, $s\in[0,s_0]$ and $x\in K$.
\end{enumerate}
\end{defn}

\vspace{2ex}
A straight forward consequence of (\hyperref[Compactness]{C}) is the following

\begin{lem}\label{p-64}
Assume that {\rm(\hyperref[Compactness]{C})} holds. For any $T > 0$, let $H\subset{\bf C}_T$ be a closed set and $H_0:= \{\varphi(0):\varphi\in H\}$ a compact set in $\mathbb{R}^r$. Then $\mathcal{S}_T(H)>0$ whenever $H$ does not contain any solution of system \eqref{unpersys}.
\end{lem}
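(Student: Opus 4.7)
The plan is to argue by contradiction using the compactness assumption (\hyperref[Compactness]{C}), the lower semi-continuity of $\mathcal{S}_T$, and Remark~2.1 which characterises zero-action paths as solutions of the deterministic system~\eqref{unpersys}.

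Suppose, toward a contradiction, that $\mathcal{S}_T(H)=0$. By the definition of the infimum, there is a sequence $\{\varphi_n\}\subset H$ with $\mathcal{S}_T(\varphi_n)\to 0$; in particular we may assume $\mathcal{S}_T(\varphi_n)\le 1$ for all $n$. Since $\varphi_n(0)\in H_0$ we therefore have
\[
\varphi_n\in\bigcup_{x\in H_0}\mathbb{F}_T^{x}(1).
\]
By hypothesis $H_0$ is compact, so condition (\hyperref[Compactness]{C}) ensures that the above union is a compact subset of $({\bf C}_T,\rho_T)$.

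Extracting a subsequence (still denoted $\{\varphi_n\}$), we obtain $\varphi_n\to\varphi^{*}$ in ${\bf C}_T$. Since $H$ is closed in ${\bf C}_T$, we have $\varphi^{*}\in H$. Lower semi-continuity of $\mathcal{S}_T$ gives
\[
\mathcal{S}_T(\varphi^{*})\le \liminf_{n\to\infty}\mathcal{S}_T(\varphi_n)=0,
\]
so $\mathcal{S}_T(\varphi^{*})=0$. By Remark~2.1 this forces $\varphi^{*}$ to coincide on $[0,T]$ with the solution of the deterministic system~\eqref{unpersys} starting from $\varphi^{*}(0)$; that is, $\varphi^{*}$ is a solution of \eqref{unpersys} lying in $H$, contradicting the standing assumption that $H$ contains no such solution.

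The proof is essentially a standard compactness/lower–semicontinuity argument, so no genuine obstacle is expected; the only point that has to be handled carefully is making the $s_0$ in (\hyperref[Compactness]{C}) uniform over the minimising sequence, which is why we truncate at $\mathcal{S}_T(\varphi_n)\le 1$ from the start so that all $\varphi_n$ lie in a single compact set $\bigcup_{x\in H_0}\mathbb{F}_T^{x}(1)$ and a convergent subsequence can be extracted.
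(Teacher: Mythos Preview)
Your proof is correct and follows essentially the same approach as the paper: contradiction via a minimising sequence, compactness from (\hyperref[Compactness]{C}) applied with a fixed level $s$ (you take $s=1$, the paper takes an arbitrary $s>0$), extraction of a convergent subsequence, lower semi-continuity to get $\mathcal{S}_T(\varphi^{*})=0$, and Remark~2.1 to reach the contradiction. The only cosmetic difference is that the paper intersects $\bigcup_{x\in H_0}\mathbb{F}_T^x(s)$ with $H$ to obtain a compact set containing the limit, whereas you use compactness of the union together with closedness of $H$ separately; both yield $\varphi^{*}\in H$.
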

\begin{proof}
Suppose that $\mathcal{S}_T(H)=0$.
Then, one can choose a sequence $\{\varphi_n\}_{n\geq1}\subset H$ such that
\begin{equation}\label{S-phi-0}
\lim_{n\to\infty}\mathcal{S}_T(\varphi_n)=0.
\end{equation}
Given $s>0$, without lost of generality, we assume $\{\varphi_n\}$ is such that
$\mathcal{S}_T(\varphi_n)<s$ for any $n\geq1$, that is,
\begin{equation}\label{S-phi-1}
	\{\varphi_n\}\subset \bigcup_{x\in H_0}\mathbb{F}^x_T(s).
\end{equation}
By (\hyperref[Compactness]{C}), the set $F\triangleq\left(\bigcup_{x\in H_0}\mathbb{F}^x_T(s)\right)\cap H$ is compact in ${\bf C}_T$.
It then follows that there exists a subsequence of $\{\varphi_n\}_{n\geq1}$, still denoted by $\{\varphi_n\}$, converges to some $\varphi_0\in F$.

We show that $\mathcal{S}_T(\varphi_0)=0$.
In fact, \eqref{S-phi-0} and the lower semi-continuity of $\mathcal{S}_T(\varphi)$ with respect to $\varphi\in{\bf C}_T$ (see \cite[Lemma 3.2.1(a)]{FW2}) imply that
\begin{equation*}
\mathcal{S}_T(\varphi_0)\leq\lim_{n\to\infty}\mathcal{S}_T(\varphi_n)=0.
\end{equation*}
Together with Remark \hyperref[rmk-2.1]{2.1}, this entails that $\varphi_0\in F\subset H$ coincides with the solution of system \eqref{unpersys} up to time $T$, a contradiction.
\end{proof}

\section{Main Results}\label{main-result}

Before presenting our main results, we make the following assumptions:

\vspace{2.5ex}
\phantomsection
\noindent\textbf{(H1)}\label{H-1} System \eqref{unpersys} is cooperative, irreducible and dissipative on  $\mathbb{R}^r$.
\vspace{2ex}

\phantomsection
\noindent\textbf{(H2)}\label{H-2} There exists a nonnegative smooth function $V_1$ on $\mathbb{R}^r$ and positive numbers $\gamma,\varepsilon_0, R>0$ such that
\begin{equation}\label{Vinfty}
	\lim_{|x| \to \infty} V_1(x) = \infty
\end{equation}
and
\begin{equation}\label{Vdissip}
	\langle b(x), \nabla V_1(x) \rangle + \frac{\varepsilon^2}{2} \text{Tr} \left( \sigma^T(x) D^2 V_1(x) \sigma(x) \right) \leq -\gamma
\end{equation}
for any $\varepsilon \in [0, \varepsilon_0]$ and $|x| \geq R$, where $\text{Tr}(A)$ denotes the trace of the matrix $A$, $\nabla V_1(x)$ represents the gradient of $V_1(x)$ and $D^2V_1(x)$ stands for Hessian of $V_1(x)$.

\vspace{2ex}
\phantomsection
\noindent\textbf{(H3)}\label{H-3} There exist a nonnegative smooth function $V_2$ on $\mathbb{R}^r$ and positive numbers $\theta,\eta,C,M>0$ such that
\begin{equation}\label{21}
	\lim_{|x| \to \infty} V_2(x) = \infty
\end{equation}
and
\begin{equation}\label{22}
	\langle b(x),\nabla V_2(x)\rangle+\frac{\theta}{2}{\rm Tr}\left(\sigma^T(x)D^2 V_2(x)\sigma(x)\right)+\frac{|\sigma^T(x) \cdot \nabla V_2 (x)|^2}{\eta V_2(x)}\leq C\big(1+V_2(x)\big)
\end{equation}
and
\begin{equation}\label{23}
	{\rm Tr}\left(\sigma^T(x)D^2 V_2(x)\sigma(x)\right)\geq -M -CV_2(x)
\end{equation}
for any $x\in\mathbb{R}^r$.

\vspace{2ex}
\noindent\textbf{Remark 3.1.}
More relevant critical information from (\hyperref[H-1]{H1})-(\hyperref[H-3]{H3}) is as follows:
\begin{enumerate}[(i)]
\item (\hyperref[H-2]{H2}) guarantees the following key facts:
\begin{enumerate}[(a)]
	\item System \eqref{itodff} admits a unique stationary measure $\mu^{\varepsilon}$ for each $\varepsilon>0$ sufficiently small (see, e.g., Khasminskii \cite[Theorem 4.1 and Corollary 4.4]{Kha-80} or Huang et al. \cite[Theorem A]{H-15-2});
	\item The family $\mathscr{I}:=\{\mu^{\varepsilon} : \varepsilon>0 \text{ sufficiently small}\}$ is tight, i.e., for each $\eta > 0$ there exists a compact set $K\subset\mathbb{R}^r$ such that $\nu(K) > 1 -\eta$ for every $\nu\in\mathscr{I}$ (see, e.g., Huang et al. \cite[Theorem B and Remark 2.3(2)]{Huang2018});
\end{enumerate}
\item (\hyperref[H-3]{H3}) guarantees that the solution $\left\{X^{\varepsilon,x}_{\cdot}\right\}$ admits the Freidlin--Wentzell uniform large deviations principle with respect to the rate functions $\mathcal{S}_{T}^{x}$ (see \cite[Theorem 2.1]{JWZZ}).
\item The irreducible assumption in (\hyperref[H-1]{H1}) is only to ensure the strong monotonicity of the system. In fact, the irreducibility can be weakened as in Section \ref{app} to guarantee the strong monotonicity.
\end{enumerate}

\vspace{1ex}
Our main theorem is following:

\begin{thm}\label{Mthm}
Assume that {\rm(\hyperref[H-1]{H1})}-{\rm(\hyperref[H-3]{H3})} hold.
Let $\mu$ be a zero-noise limit of the system \eqref{itodff} and $H$ be any connected component of ${\rm supp}(\mu)$. Then $H$ is contained in an arc {\rm(}possibly degenerate{\rm)} of Lyapunov stable equilibria of system \eqref{unpersys}.
\end{thm}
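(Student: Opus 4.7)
The plan is to fix a connected component $H$ of $\mathrm{supp}(\mu)$ and argue in three stages: (i) $H$ is a compact, connected, invariant, internally chain transitive (ICT) set; (ii) $H$ cannot sit on a nonmonotone manifold; and (iii) being then totally ordered and ICT in a strongly monotone flow, $H$ is contained in a stationary $p$-arc of Lyapunov stable equilibria.

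For (i), compactness of $\mathrm{supp}(\mu)$ is immediate from the tightness of $\{\mu^\varepsilon\}$ supplied by Remark 3.1(i)(b). Since $\mu$ is the weak limit of the stationary measures $\mu^\varepsilon$ and the stochastic semigroup converges locally uniformly to the deterministic flow $\Phi_t$ as $\varepsilon\to 0$, $\mu$ is $\Phi_t$-invariant. Coupling Poincar\'e recurrence for $\mu$ with the upper bound (\hyperref[Upper]{U}) of the uniform LDP applied to the complement of a neighborhood of the chain recurrent set (as in Lemma \ref{p-64}) places $\mathrm{supp}(\mu)$ inside this chain recurrent set; every connected component $H$ is therefore ICT.

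For (ii), suppose by contradiction $H$ lies on a nonmonotone manifold. By Proposition \ref{V-y-A=0}, the quasipotential from $H$ to its upper (or lower) dual attractor $\mathcal{A}$ vanishes, and $\mathcal{A}$ is disjoint from $H$ by the monotone geometry. Using the lower bound (\hyperref[Lower]{L}) together with the linear-interpolation estimate of Lemma \ref{Vctin} to concatenate pieces of zero-cost deterministic flow with short ${\rm LIF}$ pushes, one manufactures controls of arbitrarily small action carrying trajectories from a neighborhood of $H$ into the basin of $\mathcal{A}$. The Freidlin--Wentzell $W$-graph (or cycle/Khasminskii) representation of $\mu^\varepsilon$, used in the spirit of \cite{XCJ,FW2}, then forces $\mu^\varepsilon(H_\delta)\to 0$ for some $\delta>0$, contradicting $H\subset \mathrm{supp}(\mu)$.

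Hence $H$ is totally ordered. Strong monotonicity from (\hyperref[H-1]{H1}) then forces $H\subset \mathcal{E}$: a non-equilibrium $x\in H$ would have $\Phi_t(x)\gg x$ or $\Phi_t(x)\ll x$ for small $t>0$, so its forward orbit is strictly monotone with $\omega$-limit an equilibrium strictly above (or below) $x$ inside $H$, and no $\varepsilon$-chain can reverse this inside the totally ordered $H$, contradicting ICT. Lyapunov stability of each $p\in H$ is a standard zero-noise limit consequence: an unstable $p$ would admit a nearby initial condition whose deterministic flow leaves a prescribed neighborhood, so (\hyperref[Lower]{L}) produces a low-cost exit and the Freidlin--Wentzell upper bound on $\mu^\varepsilon(B_\delta(p))$ forces $p\notin \mathrm{supp}(\mu)$. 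Finally, a connected, compact, totally ordered subset of $\mathcal{E}$ in a $C^1$ cooperative system parameterizes as a stationary $p$-arc via Mierczy\'nski's construction (from $b\equiv 0$ on $H$ and the $C^1$-structure), possibly degenerating to a point.

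The main obstacle is step (ii): converting the qualitative vanishing of the quasipotential into a quantitative mass loss of $\mu^\varepsilon$ near $H$. Unlike the classical Freidlin--Wentzell setting, the target $\mathcal{A}$ is not a priori a finite union of basic sets, so the $W$-graph bookkeeping must be carried out relative to the full chain recurrent decomposition, following the distribution-based strategy developed in \cite{XCJ}.
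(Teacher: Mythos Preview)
Your architecture is close to the paper's: both reduce to Hirsch's dichotomy (Lemma~\ref{Hirsch_99}) and then rule out the ``bad'' branches by showing the quasipotential from $H$ to a disjoint attractor $\mathcal{A}$ vanishes. The real gap is in how you convert $V(H,\mathcal{A})=0$ into $\mu^\varepsilon(H_\delta)\to 0$. You propose the $W$-graph machinery and correctly flag the obstruction that $\mathcal{A}$ need not belong to a finite family of basic sets. The paper sidesteps this entirely with a direct stationary-measure balance: for suitable $T,\eta,\delta$ one sets
\[
I_1=\int_{B_\delta(y)}\mathbb{P}\{X_T^{\varepsilon,z}\in\mathcal{A}_\eta\}\,\mu^\varepsilon(dz),\qquad
I_2=\int_{\mathcal{A}_\eta}\mathbb{P}\{X_T^{\varepsilon,z}\notin\mathcal{A}_\eta\}\,\mu^\varepsilon(dz),
\]
and the stationarity of $\mu^\varepsilon$ yields $I_1\le I_2$ (flux into $\mathcal{A}_\eta$ from its complement equals flux out). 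The lower bound (\hyperref[Lower]{L}), fed by $V(y,\mathcal{A})=0$, gives $\mu^\varepsilon(B_\delta(y))\le I_1\,e^{(\kappa_1+\gamma)/\varepsilon^2}$; the upper bound (\hyperref[Upper]{U}), fed by the positive barrier around $\mathcal{A}$ from Lemma~\ref{klofA}, gives $I_2\le \mu^\varepsilon(\mathcal{A}_\eta)\,e^{-(\kappa_2-\gamma)/\varepsilon^2}$ with $\kappa_2>\kappa_1$. Combining, $\mu^\varepsilon(B_\delta(y))\le e^{-((\kappa_2-\kappa_1)-2\gamma)/\varepsilon^2}\to 0$. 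No $W$-graphs, no global chain-recurrent decomposition; this is the missing ingredient.

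Your step~(iii) inherits the same gap. Once $H$ is chain transitive and not unordered, Lemma~\ref{Hirsch_99} already forces $H$ to be a stationary $p$-arc, so $H\subset\mathcal{E}$ is free and your direct argument for it is unnecessary. More importantly, your Lyapunov-stability step (``an unstable $p$ admits a low-cost exit, so $p\notin\mathrm{supp}(\mu)$'') does not stand alone: a low-cost exit from $B_\delta(p)$ does not by itself bound $\mu^\varepsilon(B_\delta(p))$---you still need a set that is \emph{hard to leave} to compare against. The paper handles the $p$-arc with an unstable endpoint on exactly the same footing as the unordered case: Proposition~\ref{V_y_Ap=0} builds a path of arbitrarily small action from any $y\in J$ by sliding slowly along the arc (where $b\equiv 0$, so the cost is $O(\tau)$) to the unstable endpoint, then following its one-dimensional strongly ordered unstable curve to a dual attractor $\mathcal{A}$; after that the same $I_1\le I_2$ balance (packaged as Lemma~\ref{key-lemma-LDP}) applies. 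So the operative trichotomy is \{unordered\} / \{$p$-arc with an unstable endpoint\} / \{$p$-arc of Lyapunov stable equilibria\}, with the first two eliminated by a single mechanism rather than by separate arguments.
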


For the cooperative and irreducible system \eqref{unpersys}, the concentration of the zero-noise limit $\mu$ on the stable stationary $p$-arc indicates the stochastic stability of \textit{a dynamical order}, which is referred as certain one-dimensional simply ordered topological structure (see \cite{Chow}).

In particular, if the system \eqref{unpersys} is analytic, then such dynamical order will degenerate into a finite number of asymptotically stable equilibria (see \cite[Theorem 3]{Jiang1} and \cite[Theorem 2]{Jiang2}), by which one can immediately obtain the following corollary:

\vspace{1ex}
\begin{cor}\label{Mthm-c}
Let all the hypotheses in Theorem \ref{Mthm} hold.
Assume further that system \eqref{unpersys} is analytic.
Then the zero-noise limit $\mu$ satisfies
\begin{equation*}
\mu=\sum_{i=1}^n\lambda_i\delta_{E_i}(\cdot)\quad \text{ with } \sum_{i=1}^n\lambda_i=1,
\end{equation*}
where $E_i$ is an asymptotically stable equilibrium of \eqref{unpersys}, and $\delta_{E_i}(\cdot)$ is the Dirac measure on $E_i$, for $i=1,\cdots,n$.
\end{cor}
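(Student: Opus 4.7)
The plan is to derive the corollary as a direct consequence of Theorem \ref{Mthm} together with the rigidity results for analytic cooperative irreducible systems cited as \cite[Theorem 3]{Jiang1} and \cite[Theorem 2]{Jiang2}, plus a compactness/isolation argument to get finiteness.

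First I would invoke Theorem \ref{Mthm}: every connected component $H$ of ${\rm supp}(\mu)$ is contained in a (possibly degenerate) stationary $p$-arc $J\subset\mathcal{E}$ of Lyapunov stable equilibria of \eqref{unpersys}. Under the extra analyticity hypothesis on $b$, the cited theorems of Jiang assert that any $p$-arc of Lyapunov stable equilibria of an analytic cooperative irreducible system must reduce to a finite set of asymptotically stable equilibria; informally, the one-dimensional arc of Lyapunov stable equilibria cannot accommodate a genuine continuum in the analytic setting, so the arc structure degenerates to isolated asymptotically stable points. Consequently, the connected component $H$ of ${\rm supp}(\mu)$, being contained in a totally disconnected finite set, must itself be a single asymptotically stable equilibrium.

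Next I would argue that only finitely many such equilibria can arise across all components of ${\rm supp}(\mu)$. By (\hyperref[H-1]{H1}) the deterministic flow $\Phi_t$ is dissipative with a compact global attractor $\Lambda\subset\mathbb{R}^r$. The tightness of $\{\mu^\varepsilon\}$ guaranteed by Remark 3.1(i)(b) ensures that every zero-noise limit $\mu$ is itself a Borel probability measure on $\mathbb{R}^r$, and standard Freidlin--Wentzell concentration arguments (which are already built into the proof of Theorem \ref{Mthm}) show that ${\rm supp}(\mu)$ is $\Phi_t$-invariant and contained in the chain recurrent set of $\Phi_t$, hence in $\Lambda$. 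Therefore ${\rm supp}(\mu)$ is compact. Since an asymptotically stable equilibrium is an isolated point of $\mathcal{E}$, the set of asymptotically stable equilibria inside the compact $\Lambda$ is automatically finite, and ${\rm supp}(\mu)$ picks out only finitely many of them, say $E_1,\dots,E_n$.

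Setting $\lambda_i:=\mu(\{E_i\})$, disjointness and $\mu(\mathbb{R}^r)=1$ immediately give $\sum_{i=1}^n\lambda_i=1$ and $\mu=\sum_{i=1}^n\lambda_i\delta_{E_i}$, which is the desired conclusion. I expect the main (and essentially the only) obstacle to be the verification that ${\rm supp}(\mu)$ is genuinely a compact subset of $\Lambda$ rather than allowing a countable accumulation of components; this is handled by quoting the tightness in Remark 3.1(i)(b) together with the deterministic invariance of ${\rm supp}(\mu)$ under $\Phi_t$, both of which are consequences of hypotheses (\hyperref[H-1]{H1})--(\hyperref[H-3]{H3}) that are already in force.
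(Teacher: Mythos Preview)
Your proposal is correct and follows essentially the same approach as the paper, which merely remarks (just before the corollary) that under analyticity the stationary $p$-arc of Lyapunov stable equilibria degenerates to a finite set of asymptotically stable equilibria by \cite[Theorem 3]{Jiang1} and \cite[Theorem 2]{Jiang2}, and then declares the corollary immediate. Your additional justification for finiteness via compactness of ${\rm supp}(\mu)\subset\Lambda$ and isolation of asymptotically stable equilibria is a reasonable fleshing-out of what the paper leaves implicit.
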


\vspace{1ex}
\phantomsection
\noindent\textbf{Remark 3.2.}\label{rmk-3.2} If system \eqref{unpersys} has a finite number of stable equilibria, the zero-noise limit is a convex combination of Dirac measures on these equilibria.

\section{Quasipotential for Monotone Flows}\label{quasi-mono}

In order to establish our main results, we first investigate the characteristic of the quasipotential for monotone flows $\Phi_t$ generated by system \eqref{unpersys}.
Throughout this section, we always assume (\hyperref[H-1]{H1}) holds.

The concept of quasipotential (see, e.g., \cite [p.90]{FW2}) is crucial for understanding the mechanisms of rare events and for characterizing the statistical behavior of transitions between stable and unstable states in stochastic dynamics.
Given $x, y\in\mathbb{R}^r$, the \textit{quasipotential} from $x$ to $y$ is defined by
$$V(x,y)\triangleq\inf_{T>0}\inf_{\varphi\in \textbf{AC}^x_T}\big\{\mathcal{S}^x_{T}(\varphi):\varphi(0)=x,\ \varphi(T)=y\big\}.$$
Without ambiguity, for a pairs of subsets $D_0,D_1\subset\mathbb{R}^r$, we define the quasipotential from $D_0$ to $D_1$ by
$$V(D_0,D_1)\triangleq\inf_{T>0}\inf_{\varphi\in \textbf{AC}_T}\big\{\mathcal{S}_{T}(\varphi):\varphi(0)\in D_0,\varphi(T)\in D_1\big\}.$$

To identify the concentration of zero-noise limits, it pays to be on the lookout of the recurrence and statistical behavior of orbits for the monotone system \eqref{unpersys}.
Particularly, the support ${\rm supp}(\mu)$ of the zero-noise limit is strongly related to the sets of recurrent points (see \cite{Chen2}), or more generally, the chain-transitive sets.

The following order-structure dichotomy for chain-transitive sets is due to Hirsch \cite{Hirsch1} (see also Mierczy\'{n}ski \cite{Janusz1} for externally chain-transitive sets):

\begin{lem}\label{Hirsch_99}
	Let $K \subset \mathbb{R}^r$ be a chain-transitive set for the flow $\Phi_t$. Then either $K$ is unordered, or $K$ is a stationary $p$-arc.
\end{lem}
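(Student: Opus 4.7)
The plan is to argue by contradiction of the first alternative: assume $K$ is not unordered, so there exist $u, v \in K$ with $u < v$, and show that $K$ must then be a stationary $p$-arc. I would organize the argument into three stages — (i) $K$ is totally (simply) ordered; (ii) every point of $K$ is an equilibrium; and (iii) $K$ admits the $C^1$-smooth increasing parametrization required by the definition of a $p$-arc.

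For stage (i), the main tool is strong monotonicity combined with chain transitivity inside $K$. Since $u < v$, strong monotonicity gives $\Phi_t(u) \ll \Phi_t(v)$ for every $t > 0$. Given an arbitrary $p \in K$, chain transitivity inside $K$ yields $(\varepsilon, T)$-chains $u \overset{\Phi|_K}{\rightsquigarrow} p$ and $p \overset{\Phi|_K}{\rightsquigarrow} v$; a perturbation argument (the strong order relation $\ll$ is open, so it survives sufficiently small perturbations along each leg of a chain) propagates the inequality through the chain and forces $p$ to be comparable with every other point of $K$. Iterating gives that $K$ is totally ordered.

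For stage (ii), suppose toward contradiction that some $p \in K$ is not an equilibrium. Then $\Phi_\tau(p) \neq p$ for some $\tau > 0$, and since $K$ is invariant and totally ordered, either $p < \Phi_\tau(p)$ or $\Phi_\tau(p) < p$; say the former. Strong monotonicity then produces a strictly increasing sequence $p \ll \Phi_\tau(p) \ll \Phi_{2\tau}(p) \ll \cdots$ inside $K$, whose $\omega$-limit lies strictly above $p$ in $K$. Chain transitivity inside $K$ demands a chain descending from $\omega(p)$ back to $p$, but strong monotonicity together with the strict gap generated by this strongly ordered ladder forbids any sufficiently fine $\varepsilon$-chain in $K$ from crossing downward. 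This contradiction forces $K \subset \mathcal{E}$.

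For stage (iii), $K$ is now a compact set of equilibria, totally ordered by $<$, and strong monotonicity upgrades this to the strong order $\ll$ on $K$ (since $p = \Phi_t(p) \ll \Phi_t(q) = q$ whenever $p < q$ in $\mathcal{E}$). Chain transitivity forbids gaps: were $K = K_1 \sqcup K_2$ with $K_1 < K_2$ and positive distance between them, then because every point is an equilibrium any $\varepsilon$-chain in $K$ is simply a finite sequence of points of $K$ whose consecutive distances are less than $\varepsilon$, which cannot cross the gap once $\varepsilon$ is smaller than that distance. Hence $K$ is connected, so any coordinate projection yields a homeomorphism with a compact interval. To upgrade this topological parametrization to a $C^1$ one, I would apply the implicit function theorem locally at each $p \in K$ to the equation $b = 0$: irreducibility of $Db(p)$ combined with Perron--Frobenius theory for cooperative matrices ensures that the kernel of $Db(p)$ along $K$ is one-dimensional and spanned by a strictly positive vector, yielding a local $C^1$ increasing parametrization of $K$ near $p$. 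Gluing these local charts along $K$ using the order as orientation then produces the required global $C^1$ increasing map $h: [0,1] \to K$. The principal obstacle is precisely this last stage: the topological content of (i) and (ii) follows from essentially order-theoretic consequences of strong monotonicity and chain transitivity, but passing from a continuous monotone parametrization to a genuinely $C^1$ one truly invokes the cooperative-irreducible structure through Perron--Frobenius, and the gluing of local charts must be handled carefully to maintain a common orientation across $K$.
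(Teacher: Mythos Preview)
The paper does not prove this lemma itself; it is quoted from Hirsch \cite{Hirsch1} (with Mierczy\'{n}ski \cite{Janusz1} for the external variant), so there is no in-paper argument to compare against. Your three-stage plan is a natural one, but stage~(i) contains a genuine gap. The propagation-through-chains idea is circular in its $\varepsilon$-dependence: to push the relation $\ll$ across one link $|\Phi_{t_i}(y_{i-1})-y_i|<\varepsilon$ you need $\varepsilon$ smaller than the coordinate gap between $\Phi_{t_i}(y_{i-1})$ and the parallel image of $v$, but strong monotonicity only preserves $\ll$, not any uniform lower bound on the separation, and the number of links grows without bound as $\varepsilon\to 0$. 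Even if this closed, you would only have shown that each $p\in K$ is comparable to the fixed pair $u,v$, not that arbitrary $p,q\in K$ are mutually comparable. Hirsch's actual argument avoids tracking inequalities along chains altogether: it relies on the Conley characterisation of chain-transitive sets as attractor-free (recorded here as Remark~4.1) together with the Limit Set Dichotomy and the nonordering of limit sets, constructing order-intervals in $K$ that would be proper attractors for $\Phi|_K$ unless $K$ already has the claimed structure.

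Stages~(ii) and~(iii) are closer to correct but also need tightening. For~(ii), once total ordering is available the Monotone Criterion gives $\omega(p)=\{e\}$ with, say, $e\gg p$; then $\{x\in K: x\geq e\}$ is a proper closed positively invariant subset attracting a $K$-neighbourhood of $e$, contradicting attractor-freeness --- this is more robust than the vague ``chains cannot cross downward''. For~(iii), before invoking Perron--Frobenius you must first exhibit a nonnegative null vector of $Db(p)$; the standard device is the limiting secant $(p_n-p)/|p_n-p|$ along $K$, which your sketch omits.
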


\vspace{1ex}
\noindent\textbf{Remark 4.1.}
By a result of Conley \cite{Conley}, a compact invariant set is chain-transitive if and only if it is attractor-free. Here, $K\subset\mathbb{R}^r$ is called \textit{attractor-free} if the restricted flow $\Phi_t|_K$ admits no attractor other than $K$ itself.

\vspace{2ex}
In the following, we will investigate the quasipotential from the chain-transitive set to certain dual attractors.
The following proposition describes the quasipotential from unordered chain-transitive sets.

\begin{prop}[Quasipotential from unordered chain-transitive sets]\label{V-y-A=0}
Let $K\subset\mathbb{R}^r$ be an unordered chain-transitive set.
Then, there exists an attractor $\mathcal{A}=\mathcal{A}(K)$ with $K\cap\mathcal{A}=\emptyset$ such that
\begin{equation}\label{VyA=0}
	V(y,\mathcal{A})=0 \quad\text{ for any }y\in K.
\end{equation}
\end{prop}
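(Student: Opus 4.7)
The plan is to first construct an attractor $\mathcal{A}$ strictly above $K$ in the cone order, and then to exhibit, for each $y\in K$, paths from $y$ into $\mathcal{A}$ that consist of a small order perturbation followed by the free deterministic flow; the Freidlin--Wentzell action of the former is controlled by Lemma \ref{Vctin}, while the latter is free by Remark \hyperref[rmk-2.1]{2.1}.

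To construct $\mathcal{A}$, I would use the fact that compactness of $K$ furnishes $m:=\sup K\in\mathbb{R}^r$ and dissipativity (H1) allows one to choose $p_0\gg m$ with $b(p_0)\ll 0$. Then $t\mapsto\Phi_t(p_0)$ is monotonically decreasing in the cone order, remains bounded, and converges to some equilibrium $e^+\in\mathcal{E}$. Since $p_0\gg y$ for every $y\in K$, strong monotonicity gives $\Phi_t(p_0)\gg\Phi_t(y)$ for all $t\geq 0$, and evaluation at $t=0$ yields $e^+\geq y$ for every $y\in K$. Combined with the unorderedness of $K$ and Lemma \ref{Hirsch_99} (ruling out $K$ being a stationary $p$-arc topped by $e^+$), this forces $e^+\notin K$. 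I would then take $\mathcal{A}$ to be an attractor above $K$ containing $e^+$, for instance obtained as $\bigcap_{t\geq 0}\Phi_t(\overline{U})$ for a suitable trapping neighborhood $U$ of $e^+$ lying in $[[\sup K,p_0]]$; by construction $K\cap\mathcal{A}=\emptyset$.

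The crux, and what I expect to be the main obstacle, is the sub-claim that for every $y\in K$ and every sufficiently small $\delta>0$ the perturbation $z_\delta:=y+\delta\mathbf{1}$ lies in ${\rm Basin}(\mathcal{A})$. By strong monotonicity $\Phi_t(z_\delta)\gg\Phi_t(y)\in K$ and $\Phi_t(z_\delta)\leq\Phi_t(p_0)\to e^+$, so the orbit of $z_\delta$ is trapped in a compact region squeezed between the orbit of $y$ and $e^+$. A hypothetical $p\in\omega(z_\delta)\cap K$ would admit, via subsequence extraction, a matching $q\in\omega(y)\subset K$ with $p\geq q$, forcing $p=q$ by unorderedness; the real difficulty is to rule out this asymptotic coalescence of a strictly ordered pair of orbits. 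Here I would invoke the nonmonotone manifold structure of $K$ alluded to in Remark \hyperref[rmk-4.2]{4.2}, combined with strong monotonicity and irreducibility, to conclude that $\Phi_t(z_\delta)$ must eventually leave every small neighborhood of $K$ upward and so be captured by $\mathcal{A}$.

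Granted the sub-claim, the rate estimate follows by explicit concatenation. Fix arbitrary $\eta>0$, choose $\delta>0$ so small that $z_\delta\in{\rm Basin}(\mathcal{A})$, and then $T>0$ so large that $|\Phi_T(z_\delta)-q|<\eta$ for some $q\in\mathcal{A}$. Concatenating three pieces---${\rm LIF}_{y,z_\delta}$, whose action is $O(\delta)$ by Lemma \ref{Vctin}; the deterministic segment $t\mapsto\Phi_t(z_\delta)$ for $t\in[0,T]$, of action zero by Remark \hyperref[rmk-2.1]{2.1}; and ${\rm LIF}_{\Phi_T(z_\delta),q}$, of action $O(\eta)$ again by Lemma \ref{Vctin}---produces an admissible path in the definition of $V(y,\mathcal{A})$ with total action $O(\delta+\eta)$. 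Letting $\delta,\eta\to 0^+$ delivers $V(y,\mathcal{A})=0$.
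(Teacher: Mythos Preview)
Your overall three-step strategy---construct an attractor strictly above $K$, show that order-perturbations of points in $K$ fall into its basin, then concatenate ${\rm LIF}$ segments with a free flow segment---is exactly the paper's, and your third step (the action estimate via Lemma~\ref{Vctin} and Remark~2.1) is correct and matches the paper. The first two steps, however, have genuine gaps.

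\textbf{Construction of $\mathcal{A}$.} Dissipativity alone does not supply a point $p_0\gg\sup K$ with $b(p_0)\ll 0$; there is no reason every coordinate of the drift is negative at some prescribed point far out in the cone. Even granting such a $p_0$, the limiting equilibrium $e^+$ need not be Lyapunov stable, so there is in general no ``suitable trapping neighborhood $U$ of $e^+$ lying in $[[\sup K,p_0]]$''---$e^+$ could well be a saddle. (Also, ``evaluation at $t=0$'' cannot give $e^+\geq y$; you need $t\to\infty$ combined with invariance of $K$.) The paper sidesteps all of this by working with $p:=\sup K$ directly: since $K$ is unordered and invariant, strong monotonicity gives $K\ll\Phi_t(p)$, and compactness of $K$ then yields $p\ll\Phi_t(p)$ for every $t>0$. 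Hence $[p,+\infty]]$ is forward invariant and $\mathcal{A}:=\bigcap_{t>0}\Phi_t[p,+\infty]]$ is automatically an attractor with $K<p\ll\mathcal{A}$; no auxiliary equilibrium is needed.

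\textbf{The sub-claim $z_\delta\in{\rm Basin}(\mathcal{A})$.} You rightly identify this as the crux, but ``invoke the nonmonotone manifold structure of $K$ alluded to in Remark~4.2'' is not an argument: that remark is a post-hoc commentary on the proposition once proved, and cites an unpublished preprint. The paper instead proves that $y\ll x$ with $y\in K$ forces $p<\omega(x)$ via the \emph{Limit Set Dichotomy} \cite[Theorem~2.4.5]{Smith}: either $\omega(y)\ll\omega(x)$, or $\omega(y)=\omega(x)=\{e\}$ for some equilibrium $e$. In the first case one sets $W=\{z\in K:z\ll\omega(x)\}$, checks $\Phi_t\overline{W}\subset W$, and uses Conley's characterization of chain-transitive sets as \emph{attractor-free} \cite{Conley} to force $W=K$, whence $K\ll\omega(x)$ and $p\leq\omega(x)$. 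In the second case, Mierczy\'nski's local structure theory \cite[Proposition~1.3]{Janusz} shows $e$ attracts all of $K$, again contradicting attractor-freeness, so this case cannot occur. Your squeezing heuristic (``$\omega(z_\delta)$ trapped between the orbit of $y$ and $e^+$'') does not by itself exclude the asymptotic coalescence you worry about; the attractor-free property of $K$ is the missing tool.
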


\begin{proof}
Let $p=\sup K$. Clearly, $K<p$, since $K$ is unordered.
Due to the invariance of $K$ and the strong monotonicity, one has
$p\ll\Phi_t(p)$ for any $t > 0$; and hence,
$\Phi_t[p,+\infty]]\subset[[p,+\infty]]$ for any $t > 0$.
Let
\begin{equation}\label{def-A}
	\mathcal{A}=\Lambda\cap\bigcap_{t>0}\Phi_t[p,+\infty]],
\end{equation}
where $\Lambda$ denotes the global attractor under the dissipation assumption in (\hyperref[H-1]{H1}). Then,
$\mathcal{A}$ is an attractor satisfying $K\ll\mathcal{A}$ and $K\cap\mathcal{A}=\emptyset$.
We call $\mathcal{A}$ the \textit{upper dual attractor} with respect to $K$.
Moreover, define
$$H^+(K)=\{x\in \mathbb{R}^r:\ y\ll\Phi_s(x) \text{ for some }y\in K\text{ and } s\ge 0\}.$$
We assert that
\begin{equation}\label{Hk-basin}
H^+(K)\subset{\rm Basin}(\mathcal{A}).
\end{equation}
For this purpose, it suffices to show the fact that if $y\ll x$ for some $y\in K$, then $p< \omega(x)$ (Indeed, if $x\in H^+(K)$, then this fact, together with the invariance of $\omega(x)$, indicates that $p<\omega(\Phi_s(x))=\omega(x)$ for some $s\geq0$, which means that $x\in {\rm Basin}(\mathcal{A})$ (see \eqref{def-A})).
Now, given $y\in K$ with $y\ll x$. By virtue of the Limit Set Dichotomy (see \cite[Theorem 2.4.5]{Smith}), one has either Case (i): $\omega(y)\ll\omega(x)$; or otherwise Case (ii): $\omega(y)=\omega(x)=\{e\}$ for some $e\in\mathcal{E}$.
We will deal with these two cases, respectively.

Case (i): $\omega(y)\ll\omega(x)$.
Let $$W=\{z\in K: z\ll \omega(x)\}.$$
Clearly, $\omega(y)\subset W\neq\emptyset$.
Due to the invariance of $\omega(x)$ and strong monotonicity in (\hyperref[H-1]{H1}), one has
$\Phi_t\overline{W}\subset W$ for any $t>0$.
This implies that there exists an attractor in $W$ with respect to $\Phi_t|_K$.
Noticing that $K$ is attractor-free, we obtain that $K=W$, which implies $K\ll  \omega(x)$.
Therefore, $p<\omega(x)$.

Case (ii): $\omega(y)=\omega(x)=\{e\}$ for some $e\in\mathcal{E}$.
Clearly, $e\in K$ (because $y\in K$). Let
$\lambda_1(e)$ be the largest real part of the eigenvalues of the linearization matrix $Db(e)$ of the vector field $b(\cdot)$ at $e$. 

We assert that $\lambda_1(e)\leq0$.
The proof of the assertion is motivated by \cite[Theorem 2.4.5]{Smith}. For completeness, we give the detail.
Suppose that \( \lambda_1(e) > 0 \). Then, the spectral radius $\rho(e)$ of $D\Phi_1(e)$ satisfies \( \rho(e) > 1 \).
Hence, \cite[Lemma 2.4.2]{Smith} implies that there exists a unit eigenvector \( z_0=z_0(e) \gg 0 \) such that
\begin{equation}\label{eig-z0}
	D\Phi_1(e)z_0=\rho(e)z_0.
\end{equation}
For each $n\geq1$, let \( u_n = \Phi_n(x) \) and \( v_n = \Phi_n(y) \). By the strong monotonicity of \( \Phi_t \), \( v_n \ll u_n \). Define $\alpha_n \triangleq \sup\left\{ \alpha > 0 : v_n + \alpha z_0 \leq u_n \right\} > 0.$
So, $v_n\ll v_n + \alpha_n z_0 \leq u_n$, for each $n\geq1.$
Note also that \( \omega(x) = \omega(y) = \{e\} \). Then \( u_n \to e \) and \( v_n \to e \) as \( n \to \infty \), which implies $\alpha_n\to0$ as $n\to\infty$. On the other hand, for each $n\geq1$, one has
\begin{equation}\label{Sl}
\begin{split}
\Phi_1(v_n + \alpha_n z_0) - \Phi_1(v_n) = D\Phi_1(e)\alpha_n z_0 + \alpha_n \delta_n
\xlongequal{\eqref{eig-z0}}\rho(e)\alpha_n z_0 + \alpha_n \delta_n,
\end{split}
\end{equation}
where
$\delta_n = \int_{0}^{1} \left( D\Phi_1(v_n + s\alpha_n z_0) - D\Phi_1(e) \right) z_0 ds.$
Since \( D\Phi_1(z) \) is continuous with respect to \( z \), and \( v_n + s\alpha_n z_0 \to e \) as $n\to\infty$ uniformly for \( s \in [0,1] \),  we obtain
$\sup_{0 \leq s \leq 1}|( D\Phi_1(v_n + s\alpha_n z_0) - D\Phi_1(e))z_0| \to 0,$ as $n \to \infty.$
This implies \( \left|\delta_n\right| \to 0 \) as \( n \to \infty \). Let \( d_n \triangleq \inf\left\{ \beta > 0 : -\beta z_0 \leq \delta_n \leq \beta z_0 \right\} \). Then, $d_n\to0$ as $n\to\infty$. Consequently, $\rho(e) - d_n > 1$ for $n$ sufficiently large.
Noticing that $\delta_n\geq- d_n z_0$, one can substitute this into \eqref{Sl}, which yields
\begin{equation*}
\Phi_1(v_n + \alpha_n z_0) - \Phi_1(v_n)\geq \big(\rho(e)-d_n\big)\alpha_n z_0 >\alpha_n z_0,
\end{equation*}
for all $n$ sufficiently large. In other words,
$\Phi_1(v_n + \alpha_n z_0) > \Phi_1(v_n) + \alpha_n z_0 = v_{n+1} + \alpha_n z_0$.
Hence, $u_{n+1} > v_{n+1} + \alpha_n z_0,$ for all $n$ sufficiently large
(due to monotonicity of \( \Phi_1 \) and \( v_n + \alpha_n z_0 \leq u_n \)).
So, by the definition of \( \alpha_{n+1} \), it entails that \( \alpha_{n+1} \geq \alpha_n \), contradicting $\alpha_n \to 0$. Thus, we have proved the assertion.

Due to the assertion $\lambda_1(e)\leq0$, it follows from Mierczy\'{n}ski \cite[Proposition 1.3(i) and (iii)]{Janusz} that there exists an invariant set $C_e$ which is a one-codimensional sub-manifold homeomorphic to some open set in $\mathbb{R}^{r-1}$ satisfying
\begin{equation}\label{prop-c}
|\Phi_t(z)- e|\to0\ \ \text{ as }\ t\to\infty,\ \quad\text{ for any }z\in C_e.
\end{equation}
Fix $v\gg0$. For such $C_e$, it follows from Hirsch \cite[Proposition 2.7]{H-88} that the map $F: C_e\times \mathbb{R}\to \mathbb{R}^r$; $(z,\lambda)\mapsto z+\lambda v$
is a homeomorphism onto an open subset $U$ in $\mathbb{R}^r$.
Clearly, $e\in U$. Then, one can find a small $\delta>0$ such that $B_\delta(e)\subset U$; and hence, any \(w \in B_\delta(e) \) can be written as \( w = z + \lambda_w v \), where \( z \in C_e \) and \( \lambda_w \in \mathbb{R} \).

Let $K^\delta=K\cap B_\delta(e)$. We \textit{claim that} $K^\delta\subset C_e$.
Otherwise, choose a $w\in K^\delta\backslash C_e\neq\emptyset$.
Since $w\in B_\delta(e)$, there exists $z\in C_e$ and $\lambda_w\in\mathbb{R}$ such that $w=z+\lambda_wv$, which implies that $w$ and $z$ are related by $``\ll"$.
By virtue of \eqref{prop-c} and strong monotonicity, $\omega(w)$ and $\{e\}$ are related by $``\le"$.
Noticing that $e\in K$, $w\in K$ (hence $\omega(w)\subset K$) and $K$ is unordered, we obtain $\omega(w)=\{e\}$; and hence, $w\in{\rm Basin}(e)\backslash C_e.$
By virtue of \cite[Proposition 1.3(v)]{Janusz}, $\Phi_t(w)$ converges monotonically to $e$ as $t\to\infty$.
This leads to a contradiction, since $w\in K$ and $K$ is unordered. Thus, we have proved the claim.

Due to \eqref{prop-c}, the claim then implies that $e$ is an attractor in $K$, contradicting that $K$ is attractor-free. So, case (ii) cannot occur. Thus, we have proved the \eqref{Hk-basin}.

\vspace{2ex}

Now, choose $\delta_0>0$ so small that $\overline{K_{\delta_0}}\cap\overline{\mathcal{A}_{\delta_0}}=\emptyset$.
In order to prove \eqref{VyA=0} for each $y\in K$, one needs to show that, for any $\eta>0$, there exist $T>0$ and $\psi^y\in {\bf AC}_T^y$ with $\psi^y(0)=y$, $\psi^y(T)\in \mathcal{A}$ such that
\begin{equation*}
	\mathcal{S}^y_{T}(\psi^y)\leq \eta.
\end{equation*}
To this end, let $L_1,L_2>0$ be given in Lemma \ref{Vctin} for the compact sets $\overline{K_{\delta_0}}$ and $\overline{\mathcal{A}_{\delta_0}}$, respectively, such that
\begin{equation}\label{SLIF_le_L_K}
	\mathcal{S}^{x_1}_{|x_1-x_2|}({\rm LIF}_{x_1,x_2})\leq L_1|x_1-x_2|\quad \text{ for any } x_1,x_2\in \overline{K_{\delta_0}},
\end{equation}
and
\begin{equation}\label{SLIF_le_L_A}
	\mathcal{S}^{x_1}_{|x_1-x_2|}({\rm LIF}_{x_1,x_2})\leq L_2|x_1-x_2|\quad \text{ for any } x_1,x_2\in \overline{\mathcal{A}_{\delta_0}}.
\end{equation}
Let $\delta=\min \left\{\delta_0,\dfrac{\eta}{L_1+L_2}\right\}>0$ and $y_{\delta}=y+\delta v$ with $v\gg 0$, $|v|=1$.
Clearly,
\begin{equation}\label{delta_1}
	y\ll y_\delta\quad\text{ and }\quad|y-y_\delta|= \delta.
\end{equation}
Moreover,
$y_{\delta}\in H^+(K)$.
By virtue of \eqref{Hk-basin},
$$\lim_{t\rightarrow \infty}{\rm dist}(\Phi_t(y_{\delta}), \mathcal{A})=0,$$
which implies that there exist $T_1>0$ and $z\in \mathcal{A}$ such that
\begin{equation}\label{delta_2}
	|\Phi_{T_1}(y_{\delta})-z|<\delta.
\end{equation}
Let $y_1=\Phi_{T_1}(y_{\delta})$ and $T=|y-y_\delta|+T_1+|z-y_1|$. Define $\psi^y\in {\bf AC}_T^y$ as:
\begin{equation*}  \psi^y(t)=
	\begin{cases}
		{\rm LIF}_{y,y_\delta}(t), & t\in \big[0, |y-y_\delta|\big); \vspace{2mm}\\
		\Phi_{t-|y-y_\delta|}(y_{\delta}), & t\in \big[|y-y_\delta|, |y-y_\delta|+T_1\big);\vspace{2mm}\\
		{\rm LIF}_{y_1,z}(t-|y-y_\delta|-T_1), & t\in \big[|y-y_\delta|+T_1, T\big].
	\end{cases}
\end{equation*}
Then, together with \eqref{SLIF_le_L_K}-\eqref{delta_2}, we can estimate
\begin{equation}
\begin{split}
&\mathcal{S}^y_T(\psi^y)= \mathcal{S}^y_{|y-y_\delta|}({\rm LIF}_{y,y_\delta})+0+\mathcal{S}^{y_1}_{|z-y_1|}({\rm LIF}_{\Phi_{T_1}(y_\delta),z})\\
&\quad\ \ \overset{\eqref{SLIF_le_L_K}-\eqref{delta_1}}{\leq} L_1\delta+L_2|z-y_1|
\overset{\eqref{delta_2}}{<} (L_1+L_2)\delta
\leq\eta.
\end{split}\tag*{}
\end{equation}

Thus, we have completed the proof.
\end{proof}

\vspace{1ex}
\phantomsection
\noindent\textbf{Remark 4.2.}\label{rmk-4.2}
For any unordered chain-transitive set $K$, one can similarly define its lower dual attractor and obtain that the quasipotential from $K$ to its lower dual attractor equals zero as well. Let $$H^-(K)=\{x\in \mathbb{R}^r:\ \Phi_s(x)\ll y \text{ for some }y\in K\text{ and } s\ge 0\}.$$
It is known that such $K$ is contained in both the lower boundary of $H^+(K)$ and the upper boundary of $H^-(K)$, which are $1$-codimensional $C^1$-smooth invariant submanifolds, called \textit{nonmonotone manifolds} (see Tere\v{s}\v{c}\'{a}k \cite{Terescak94}, or \cite{H-88,Ta92}).

\vspace{1.5ex}

\begin{prop}[Quasipotential from stationary $p$-arcs]\label{V_y_Ap=0}
Let $J\subset\mathbb{R}^r$ be a stationary $p$-arc with an unstable endpoint.
Then, there exists an attractor $\mathcal{A}=\mathcal{A}(J)$ with $J\cap\mathcal{A}=\emptyset$ such that
\begin{equation}\label{VyAj=0}
	V(y,\mathcal{A})=0\quad \text{ for any }y\in J.
\end{equation}

\end{prop}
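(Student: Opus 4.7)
Proof proposal. The plan parallels Proposition \ref{V-y-A=0} with two new ingredients tailored to the $p$-arc case: a slow traversal along $J$ (cheap because $b \equiv 0$ on a stationary $p$-arc) and a backward-in-time orbit on the strongly-unstable manifold of the unstable endpoint (cheap because it is an orbit of the flow, of zero action). Without loss of generality, take the upper endpoint $J_1 := h(1)$ to be unstable; the lower-endpoint case is handled symmetrically via the lower-dual construction of Remark \hyperref[rmk-4.2]{4.2}.

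I would first produce the escape direction and the attractor. Perron--Frobenius on the irreducible cooperative matrix $Db(J_1)$ yields a real simple principal eigenvalue $\lambda_1(J_1)$ with eigenvector $v \gg 0$; the strict inequality $\lambda_1(J_1) > 0$ follows from instability, as $\lambda_1 \leq 0$ would render $J_1$ Lyapunov stable along the $p$-arc (cf.\ Case (ii) in the proof of Proposition \ref{V-y-A=0} and \cite[Proposition 1.3]{Janusz}). Then $b(u_0) \gg 0$ for $u_0 := J_1 + s_0 v$ with $s_0 > 0$ small, so $\Phi_t(u_0)$ is monotone increasing for $t > 0$ by strong monotonicity, and bounded by (\hyperref[H-1]{H1}), hence convergent to an equilibrium $\gg J_1$. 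Setting
\begin{equation*}
\mathcal{A} := \bigcap_{t > 0} \Phi_t[u_0, +\infty]]
\end{equation*}
produces, as in Proposition \ref{V-y-A=0}, a compact attractor with $\mathcal{A} \gg J_1$ (so $\mathcal{A} \cap J = \emptyset$) and ${\rm Basin}(\mathcal{A}) \supset [u_0, +\infty]]$. The strongly-unstable manifold theorem for the eigenvalue $\lambda_1 > 0$ then provides a one-dimensional $C^1$ local curve $W^u_{\rm loc}(J_1)$ through $J_1$, tangent to $v$, on which $\Phi_{-T_0}(u_0) \to J_1$ exponentially as $T_0 \to \infty$ (shrink $s_0$ if necessary so that $u_0 \in W^u_{\rm loc}$).

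For $y = h(s) \in J$ and any prescribed $\eta > 0$, I would concatenate four segments into a path $\psi^y$ ending in $\mathcal{A}$ with total action $< \eta$. Segment 1 traces $J$ slowly: $\varphi_1(t) = h(s + (1-s) t/T)$ on $[0, T]$; since $b \equiv 0$ on $J$,
\begin{equation*}
\mathcal{S}_T(\varphi_1) = \frac{1-s}{2T} \int_s^1 h'(u)^T a^{-1}(h(u)) h'(u)\, du \;\leq\; \frac{C_J}{T} \;<\; \eta/4
\end{equation*}
for $T$ large. Segment 2 is ${\rm LIF}_{J_1, p_0}$ with $p_0 := \Phi_{-T_0}(u_0)$; by Lemma \ref{Vctin} and the exponential backward convergence, its cost is $\leq L\, |p_0 - J_1| < \eta/4$ for $T_0$ large. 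Segment 3 is the cost-free orbit $\varphi_3(t) = \Phi_t(p_0)$ on $[0, T_0]$, ending at $u_0$. Segment 4 flows cost-free from $u_0 \in {\rm Basin}(\mathcal{A})$ until within distance $\delta'$ of some $z \in \mathcal{A}$, then closes with a final ${\rm LIF}$ of cost $\leq L \delta' < \eta/4$. Concatenation yields $\mathcal{S}^y_{T'}(\psi^y) < \eta$, and since $\eta$ is arbitrary, $V(y, \mathcal{A}) = 0$.

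The main obstacle is Segments 2--3, i.e., leaving the equilibrium $J_1$ with arbitrarily small action. A bare linear interpolation from $J_1$ to a point in ${\rm Basin}(\mathcal{A})$ would carry a fixed positive cost $\gtrsim L\, s_0$ and could not be driven to zero; the key point is that the strongly-unstable manifold affords an essentially free ``ramp'' from $J_1$ into ${\rm Basin}(\mathcal{A})$, because its backward orbit returns exponentially to $J_1$ and forward orbits on it have zero action. This replaces the single-jump argument of Proposition \ref{V-y-A=0}, which sufficed there because the supremum $\sup K$ was not itself in the unordered chain-transitive $K$; here $J_1 \in J$ is an equilibrium, so flow-based escape is essential. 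Segment 1, the cheap traversal along the arc of equilibria, is the other novelty, working precisely because $J \subset \mathcal{E}$ reduces the Lagrangian to the kinetic form $\tfrac{1}{2}\dot\varphi^T a^{-1} \dot\varphi$, which scales as $1/T$ under uniform reparametrization of $h$.
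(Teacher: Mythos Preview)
Your overall architecture --- slow traversal along $J$ (Segment~1), short jump near the unstable endpoint, free forward flow into the basin, closing ${\rm LIF}$ --- matches the paper's proof, and Segments~1 and~4 with their estimates are essentially the paper's.

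The gap is in the escape mechanism (Segments~2--3), specifically the claim $\lambda_1(J_1)>0$. In fact $\lambda_1(J_1)=0$ necessarily at the endpoint of a stationary $p$-arc: differentiating $b(h(s))\equiv 0$ gives $Db(h(s))\,h'(s)=0$; since $h$ is $C^1$ and increasing, $h'(s)\geq 0$ componentwise with $h'(s)\neq 0$ on a dense set of $s$, and Perron--Frobenius for the irreducible cooperative matrix $Db(h(s))$ then forces $\lambda_1(h(s))=0$ at each such $s$, hence $\lambda_1(J_1)=0$ by continuity of the Perron root. Consequently there is no strongly-unstable manifold at $J_1$, your first-order computation gives only $b(u_0)\approx s_0\lambda_1 v=0$ (so $b(u_0)\gg 0$ is unproven), and the backward-orbit device $p_0=\Phi_{-T_0}(u_0)\to J_1$ has no footing. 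Your appeal to Case~(ii) of Proposition~\ref{V-y-A=0} does not rescue this either: $\lambda_1\leq 0$ in \cite[Proposition~1.3]{Janusz} yields only an unordered stable set through $e$, not Lyapunov stability of $e$; the instability of $J_1$ here is a genuinely nonlinear, center-type phenomenon.

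The paper handles this degeneracy by invoking \cite[Proposition~1.4(i)]{Janusz}, which at \emph{any} equilibrium $p$ furnishes a one-dimensional locally forward-invariant simply-ordered $C^1$ curve $W_p^+$ with $p=\inf W_p^+$, with no eigenvalue hypothesis. A dynamical dichotomy on $W_p^+$ (orbits on the ordered curve either decrease toward $p$ or increase away) combined with the Monotone Criterion and the assumed instability of $p$ rules out the decreasing alternative, giving $x\ll\Phi_t(x)$ for $x\in W_p^+\setminus\{p\}$; the attractor is then $\mathcal{A}=\bigcap_{t>0}\Phi_t[x,+\infty]]$. From $p$ the path is simply an ${\rm LIF}$ to a nearby $q\in W_p^+\setminus\{p\}$ (cost $\leq L|p-q|$, made small by choosing $q$ close to $p$) followed by the free forward flow from $q$ into ${\rm Basin}(\mathcal{A})$ --- no backward-in-time orbit is needed.
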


\begin{proof}
Without lost of generality, we assume that $p:={\rm sup} J$ is unstable.
To show the existence of the attractor $\mathcal{A}$, we need some preparation.

First, it follows from \cite[Proposition 1.4(i)]{Janusz} that there exist an $\varepsilon > 0$ and a one-dimensional locally forward invariant $C^1$ manifold $W_p^+ \subset B_\varepsilon(p)$ such that $W_p^+$ is simply ordered and $p = \inf W_p^+$. Here, the locally forward invariance of $W_p^+$ means $\Phi_t(x) \in W_p^+$ provided $x \in W_p^+$ and $\Phi_t(x) \in B_\varepsilon(p)$.
Write $B_\varepsilon^+(p)\triangleq B_\varepsilon(p)\cap [p, +\infty]]$. Clearly, $W_p^+\subset B_\varepsilon^+(p)$. Moreover, we can assume without loss of generality that
\begin{equation}\label{E+B+p=empty}
	\mathcal{E}\cap B_{\varepsilon}^+(p)=\{p\}
\end{equation}
(Otherwise, there exists $\{e_n\} \subset \mathcal{E}\backslash\{p\}$ with $e_n \in B_{\frac{1}{n}}^+(p)$. Then, $e_n \to p$ as $n \to \infty$, contradicting to the instability of $p$).

Next, we \textit{claim that the unique orbit corresponding to $W^+_p\backslash\{p\}$ is monotonicity increasing.}
Given any $y\in W^+_p\backslash\{p\}$. By \eqref{E+B+p=empty}, $b(y)\neq0$. Since $W_p^+$ is simply ordered and locally forward invariant, one has either $b(y)<0$ or $b(y)>0$.
In fact, $b(y)>0$ (Otherwise, by \eqref{E+B+p=empty} and the continuity of \( b(y) \) with respect to \( y \), $b(z)<0$ for all $z\in W_p^+\cap[[p,y]]$, contradicting the instability of $p$).
Note also that the Jacobian matrix $\frac{\partial \Phi_t}{\partial x}(x)$ of $\Phi_t$ is strongly positive for all $x\in\mathbb{R}^r$ and $t>0$ (see \cite[Theorem 4.1.1]{Smith}).
Then, we have \( b(\Phi_t(y)) \gg 0 \) for all \( t > 0 \), because the derivative flow preserves the vector field as $b(\Phi_t(y))=\frac{\partial \Phi_t}{\partial x}(y)\cdot b(y)$ for any $t\geq0$.
As a consequence, for any $t_2>t_1>0$, we obtain $\Phi_{t_2}(y)-\Phi_{t_1}(y)=\int_{t_1}^{t_2}b(\Phi_s(y))\text{d}s\gg0,$ which implies that
the orbit of \( y \) is monotonically increasing. Thus, we have proved the claim.

By virtue of the claim, there exists $p_0\in\mathcal{E}$ such that
\begin{equation*}
|\Phi_t(x)-p_0|\to0\ \ \text{ as }\ t\to\infty,\ \quad\text{ for any } x\in W^+_p\backslash\{p\}.
\end{equation*}
Let
\begin{equation*}
	\mathcal{A}=\Lambda\cap\bigcap_{t>0}\Phi_t[p_0,+\infty]].
\end{equation*}
Then the dissipation assumption in (\hyperref[H-1]{H1}) yields that
$\mathcal{A}$ is an attractor satisfying $p\ll\mathcal{A}$ (since $p\ll p_0$), and so $J\cap\mathcal{A}=\emptyset$.
Hence,
\begin{equation*}
\lim_{t\to\infty}{\rm dist}(\Phi_t(x),\mathcal{A})=0\quad \text{ for any } x\in W^+_p\backslash\{p\}.
\end{equation*}

Now, choose $\delta_0>0$ so small that $\overline{J_{\delta_0}}\cap\overline{\mathcal{A}_{\delta_0}}=\emptyset$.
We will prove \eqref{VyAj=0}. For this purpose, fix any $y\in J$.
We will show that, for any $\eta>0$, there exist $T>0$ and $\psi^y\in {\bf AC}_T^y$ satisfying $\psi^y(0)=y$, $\psi^y(T)\in \mathcal{A}$ such that
\begin{equation}\label{attractup_0_j}
	\mathcal{S}^y_{T}(\psi^y)\leq \eta.
\end{equation}
In fact, let $L_1,L_2>0$ be given in Lemma \ref{Vctin} for the compact sets $\overline{B_{\delta_0}(p)}$ and $\overline{\mathcal{A}_{\delta_0}}$, respectively.
Choose the $p$-arc $J_y\ (\subset J)$ connecting $y$ and $p$. Then one can find an increasing smooth function $h$ such that $h([0,1])=J_y$ with $h(0)=y$ and $h(1)=p$. Let $M=\sup|h'(t)|$ and $\lambda(x)$, $x\in J_y$, be the smallest eigenvalue of the positive definite matrix $a(x)$ for system \eqref{itodff}.
Then the compactness of $J_y$ implies that $\inf_{x\in J_y}\lambda(x)=\lambda_0>0$.

For any $\eta>0$, let $\delta=\min\left\{\delta_0,\dfrac{\eta}{2(L_1+L_2)}\right\}>0$ and $\tau=\lambda_0M^{-2}\eta>0$.
Choose a $q\in W^+_p\backslash \{p\}$ and
\begin{equation}\label{j1}
	|q-p|<\delta.
\end{equation}
Noticing
$\lim_{t\rightarrow \infty}{\rm dist}(\Phi_t(q), \mathcal{A})=0$,
one can choose some $T_1>0$ and $z\in\mathcal{A}$ such that
\begin{equation}\label{j2}
	|\Phi_{T_1}(q)-z|<\delta.
\end{equation}
Write $y_1=\Phi_{T_1}(q)$. Now, choose $t_1=\tau^{-1}$ and define numbers $T\geq t_3\geq t_2\geq t_1>0$ as $t_2=t_1+|p-q|$, $t_3=t_2+T_1$ and $T=t_3+|y_1-z|$, by which we define $\psi^y\in {\bf AC}_T^y$ as:
\begin{equation*}  \psi^y(t)=
	\begin{cases}
		h \big(\tau t\big), & t\in \left[0, t_1\right);\vspace{1mm}\\
		{\rm LIF}_{p,q}(t-t_1), & t\in \left[t_1, t_2\right) ;\vspace{1mm}\\
		\vspace{2mm}
		\Phi_{t-t_2}(q), & t\in \left[t_2, t_3\right);\\
		\vspace{1mm}
		{\rm LIF}_{y_1,z}(t-t_3), & t\in \left[t_3, T\right].
	\end{cases}
\end{equation*}
Clearly, $\psi^y(0)=y$ and $\psi^y(T)\in\mathcal{A}$.
Recall that $b(\cdot)$ vanishes on $J_y$.
Then, together with Lemma \ref{Vctin}, Remark \hyperref[rmk-2.1]{2.1} and \eqref{j1}-\eqref{j2}, we have
\begin{align*}
	\mathcal{S}^y_T(\psi^y)= &\ \mathcal{S}^y_{t_1}\big(h(\tau t)\big)+ \mathcal{S}^p_{|q-p|}({\rm LIF}_{p,q}(t-t_1)) + \mathcal{S}^{y_1}_{|z-y_1|}({\rm LIF}_{y_1,z}(t-t_3))\\
	&\overset{{\rm Lemma }\ \ref{Vctin}}{\leq} \ \frac{1}{2}\int_{0}^{t_1}\left(\frac{d}{dt}h(\tau t)\right)^Ta^{-1}\big(h(\tau t)\big)\left(\frac{d}{dt}h(\tau t)\right)dt + L_{1} |p-q| + L_{2} |y_1-z|\\
	&\overset{\eqref{j1}-\eqref{j2}}{\leq} \ \frac{1}{2\lambda_0}\int_{0}^{t_1}\Big|\frac{d}{dt}h(\tau t)\Big|^2dt + (L_1+L_2)\delta\\
	&\quad\quad = \ \frac{\tau^2}{2\lambda_0}\int_{0}^{t_1}\Big|h'(\tau t)\Big|^2dt + (L_1+L_2)\delta	
	\leq\ \frac{\tau^2M^2}{2\lambda_0}\cdot t_1 + (L_1+L_2)\delta.
\end{align*}
Hence, by the definition of $\tau$, $t_1$ and $\delta$, we obtain that
\begin{align*}
	\mathcal{S}^y_T(\psi^y)\leq \frac{\tau^2M^2}{2\lambda_0}\cdot\tau^{-1} + (L_1+L_2)\delta
	\leq \frac{M^2}{2\lambda_0}\tau + \frac{\eta}{2}=\eta.
\end{align*}
Thus, we have obtained \eqref{attractup_0_j}, which completes the proof.
\end{proof}

\section{The Proof of Main Results}\label{proof-of-main-results}

The key point for proving our main result Theorem \ref{Mthm} is guaranteed by Lemma \ref{key-lemma-LDP} below, in which we shall estimate the probability decay rate of rare transition events between the unstable chain-transitive set and its dual attractor.

\begin{lem}\label{key-lemma-LDP}
Assume that {\rm(\hyperref[H-1]{H1})}-{\rm(\hyperref[H-3]{H3})} hold.
Suppose that $K$ is either an unordered chain-transitive set or a stationary $p$-arc with an unstable endpoint.
Then, there exist an attractor $\mathcal{A}$, $\kappa_2>\kappa_1>0$ and $\delta,\eta>0$ such that, for each $y\in K$,
\begin{align}
&{\rm(i)}\quad \liminf_{\varepsilon\to0}\varepsilon^2\log\mathbb{P}\left\{X_{T}^{\varepsilon,z}\in\mathcal{A}_{\eta}\right\}\geq -\kappa_1\quad\text{ uniformly in }z\in B_\delta(y);\label{LDP_int_01}\\
&{\rm(ii)}\quad \limsup_{\varepsilon\to0}\varepsilon^2\log\mathbb{P}\left\{X_{T}^{\varepsilon,z}\notin \mathcal{A}_\eta\right\}\leq -\kappa_2\quad\text{ uniformly in }z\in \mathcal{A}_{\eta},\label{LDP_int_02}
\end{align}
where $T>0$ depends on $y$.
\end{lem}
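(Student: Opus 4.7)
The plan is to handle (i) and (ii) separately, sharing a common choice of attractor $\mathcal{A}$, neighborhood parameter $\eta$, and time horizon $T=T(y)$. Since $K$ is either an unordered chain-transitive set or a stationary $p$-arc with an unstable endpoint, I apply Proposition \ref{V-y-A=0} or Proposition \ref{V_y_Ap=0} to obtain an attractor $\mathcal{A}=\mathcal{A}(K)$ with $K\cap\mathcal{A}=\emptyset$ and $V(y,\mathcal{A})=0$ for every $y\in K$. Using (H1), I shrink $\eta>0$ so that $\overline{\mathcal{A}_\eta}$ is a fundamental neighborhood of $\mathcal{A}$ disjoint from $K$, and pick $T_0>0$ so that $\Phi_{T_0}(\overline{\mathcal{A}_\eta})\subset\mathcal{A}_{\eta/2}$.

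For part (i), fix any small target $\kappa_1/4>0$. Proposition \ref{V-y-A=0} (resp.\ \ref{V_y_Ap=0}) supplies $T_y>0$ and $\psi^y\in{\bf AC}_{T_y}^y$ with $\psi^y(T_y)\in\mathcal{A}$ and $\mathcal{S}_{T_y}^y(\psi^y)\leq\kappa_1/4$. Set $T=T_y+\delta+T_0$, where $\delta>0$ satisfies $L\delta\leq\kappa_1/4$ for $L$ the constant of Lemma \ref{Vctin} on $\overline{B_\delta(y)}$. For each $z\in\overline{B_\delta(y)}$ I concatenate into a path $\tilde\psi^z\in{\bf AC}_T^z$: the linear interpolation ${\rm LIF}_{z,y}$ on $[0,|z-y|]$ (action $\leq L\delta$), then $\psi^y$ on $[|z-y|,\,|z-y|+T_y]$ (action $\leq\kappa_1/4$), and finally the deterministic orbit from $\psi^y(T_y)\in\mathcal{A}$ on the remaining interval (zero action; remains in $\mathcal{A}$). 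Thus $\mathcal{S}_T^z(\tilde\psi^z)\leq\kappa_1/2$ and $\tilde\psi^z(T)\in\mathcal{A}$. The uniform lower bound (L) on $\overline{B_\delta(y)}$, applied with $\gamma<\kappa_1/2$ and threshold $\delta_2<\eta/2$, gives
$$\mathbb{P}\!\left\{\rho_T(X^{\varepsilon,z}_\cdot,\tilde\psi^z)<\delta_2\right\}\geq \exp\!\left(-\frac{\kappa_1/2+\gamma}{\varepsilon^2}\right)\geq \exp\!\left(-\frac{\kappa_1}{\varepsilon^2}\right),$$
and that event forces $X_T^{\varepsilon,z}\in\mathcal{A}_{\eta/2}\subset\mathcal{A}_\eta$.

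For part (ii), I keep the same $T\geq T_0$ and consider the closed set
$$G=\left\{\varphi\in{\bf C}_T:\varphi(0)\in\overline{\mathcal{A}_\eta},\ \varphi(T)\notin\mathcal{A}_\eta\right\}.$$
By the contraction $\Phi_T(\overline{\mathcal{A}_\eta})\subset\mathcal{A}_{\eta/2}$, no deterministic solution of \eqref{unpersys} starting in $\overline{\mathcal{A}_\eta}$ lies in $G$. Since the initial slice $\overline{\mathcal{A}_\eta}$ is compact, Lemma \ref{p-64} yields $\mathcal{S}_T(G)=2\kappa_2>0$ for some $\kappa_2>0$. Pick $s\in(\kappa_2,2\kappa_2)$. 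The uniform compactness (C) makes $\mathbb{F}\triangleq\bigcup_{z\in\overline{\mathcal{A}_\eta}}\mathbb{F}_T^z(s)$ compact, and by choice of $s$ it is disjoint from $G$; hence $\delta_0\triangleq{\rm dist}_{\rho_T}(\mathbb{F},G)>0$. Whenever $X_T^{\varepsilon,z}\notin\mathcal{A}_\eta$, we have $X^{\varepsilon,z}_\cdot\in G$, and so $\rho_T(X^{\varepsilon,z}_\cdot,\mathbb{F}_T^z(s))\geq\delta_0$. The uniform upper bound (U) then yields $\mathbb{P}\{X_T^{\varepsilon,z}\notin\mathcal{A}_\eta\}\leq\exp(-(s-\gamma)/\varepsilon^2)\leq\exp(-\kappa_2/\varepsilon^2)$. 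Shrinking the initial $\kappa_1$ guarantees $\kappa_1<\kappa_2$.

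The main obstacle is engineering a single pair $(\eta,T)$ that serves both parts: $\eta$ must be small enough for the contraction driving (ii), yet must contain the endpoint $\tilde\psi^z(T)$ of the low-action path in (i); and $T$ must be uniform in $z\in\overline{B_\delta(y)}$ (achieved by padding with the deterministic flow inside the invariant set $\mathcal{A}$) while being large enough to land $\overline{\mathcal{A}_\eta}$ inside $\mathcal{A}_{\eta/2}$. A secondary but essential point is upgrading Lemma \ref{p-64}'s pointwise positivity $\mathcal{S}_T(G)>0$ to a \emph{uniform} $\rho_T$-distance $\delta_0$ between $G$ and $\bigcup_z\mathbb{F}_T^z(s)$; this step crucially uses the uniform compactness property (C), not any pointwise estimate.
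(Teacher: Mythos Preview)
Your construction for part (i) and the LDP mechanics in both parts are essentially correct and close to the paper's. However, there is a genuine circularity in how you arrange the constants $\kappa_1$ and $\kappa_2$.

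Your $\kappa_2$ is defined as $\tfrac12\mathcal{S}_T(G)$ for the \emph{specific} $T=T_y+\delta+T_0$, and this $T$ depends on $\kappa_1$ (through both $T_y$ and $\delta$) and on $y$. When you write ``shrinking the initial $\kappa_1$ guarantees $\kappa_1<\kappa_2$'', you implicitly assume $\kappa_2$ stays fixed while $\kappa_1$ decreases; but decreasing $\kappa_1$ typically forces $T_y$ (hence $T$) to grow, which changes $G$ and may shrink $\mathcal{S}_T(G)$. Lemma~\ref{p-64} only gives positivity of $\mathcal{S}_T(G)$ for each fixed $T$, not a lower bound uniform in $T$, so nothing prevents $\kappa_2(T)\to 0$ along your iteration. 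Worse, the statement demands that $\kappa_1,\kappa_2$ be independent of $y\in K$, whereas your $\kappa_2$ inherits $y$-dependence through $T=T(y)$.

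The paper closes this gap by invoking Lemma~\ref{klofA} \emph{before} anything else: it first extracts $0<\delta_1<\delta_2$ with $s_1:=V(\overline{\mathcal{A}_{\delta_1}},\partial\mathcal{A}_{\delta_2})>0$, sets $\eta=2\delta_2$, chooses $T_0$ so that $\Phi_t(\overline{\mathcal{A}_\eta})\subset\mathcal{A}_{\delta_1}$ for $t\ge T_0$, and records $s_2:=\mathcal{S}_{T_0}(H)>0$ with $H$ defined at the fixed time $T_0$. Setting $s_0=\min\{s_1,s_2\}$, $\kappa_1=\tfrac12 s_0$, $\kappa_2=\tfrac34 s_0$ \emph{a priori} (independent of $y$ and $T$), one then constructs the low-action path of cost $\le\kappa_1$ and lets $T>T_0$ be whatever it needs to be. The upper bound for this $T$ follows from a two-case argument: either $\varphi(T_0)\notin\mathcal{A}_{\delta_1}$, giving action $\ge s_2$ on $[0,T_0]$, or $\varphi(T_0)\in\mathcal{A}_{\delta_1}$ and the path must subsequently cross $\partial\mathcal{A}_{\delta_2}$, contributing action $\ge s_1$. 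This makes the bound valid for \emph{every} $T>T_0$, eliminating both the circularity and the $y$-dependence. Your argument can be repaired by inserting exactly this use of Lemma~\ref{klofA}; without it, the final sentence of your proposal is unjustified.
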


\begin{proof}
By virtue of Propositions \ref{V-y-A=0} and \ref{V_y_Ap=0}, there exists an attractor $\mathcal{A}$ with $K\cap\mathcal{A}=\emptyset$ such that
\begin{equation}\label{V_B_A=0}
	V(y,\mathcal{A})=0\quad\text{ for any }y\in K.
\end{equation}
Choose $\delta_0>0$ so small that $\overline{K_{\delta_0}}\cap\overline{\mathcal{A}_{\delta_0}}=\emptyset$.
By \cite[Lemma 4.2]{XCJ}, one can find $0<\delta_1<\delta_2<\frac{\delta_0}{2}$ such that
\begin{equation}\label{def_s_1}
	s_1\triangleq V\left(\overline{\mathcal{A}_{\delta_1}},\partial \mathcal{A}_{\delta_2}\right)>0.
\end{equation}
Let $\eta=2\delta_2<\delta_0$.
Then there exists $T_0>0$ such that
\begin{equation}\label{T0}
	\Phi_t(x)\in \mathcal{A}_{\delta_1}\quad \text{for any } x\in\overline{\mathcal{A}_{\eta}} \text{ and }  t\ge T_0.
\end{equation}
Denote
\begin{equation}\label{def_H}
	H=\{\varphi\in {\bf C}_{T_0}:\varphi(0)\in \overline{\mathcal{A}_{\eta}},\ \varphi(T_0)\notin \mathcal{A}_{\delta_1}\}.
\end{equation}
Clearly, $H\subset{\bf C}_{T_0}$ is closed; and moreover, $H$ does not contain any solution of system \eqref{unpersys} (due to (\ref{T0})). Then, Lemma \ref{p-64} entails that
\begin{equation}\label{def_s_2}
	s_2\triangleq\mathcal{S}_{T_0}(H)>0.
\end{equation}
Now, we define
\begin{equation*}
\kappa_1=\frac{1}{2}s_0\quad \text{ and }\quad  \delta=\min\left\{\delta_1,\frac{\kappa_1}{2 L}\right\},
\end{equation*}
where $s_0=\min{\{s_1,s_2\}}>0$, and $L$ is given in Lemma \ref{Vctin} for the compact set $\overline{K_{\delta_0}}$.

For each $y\in K$, we assert that there is a $T>T_0$ such that for each $z\in\overline{B_\delta(y)}$, one can find $\psi^z\in \textbf{AC}_T$ with $\psi^z(0)=z$ and $\psi^z(T)\in \mathcal{A}$ satisfying
\begin{equation}\label{attractup}
	\mathcal{S}^z_{T}(\psi^z)\leq \kappa_1.
\end{equation}
To see this, we recall that Lemma \ref{Vctin} yields that
\begin{equation}\label{attraloc_1}
	\mathcal{S}^z_{|z-y|}({\rm LIF}_{z,y})\leq L|z-y|\leq L\delta\leq \frac{\kappa_1}{2}.
\end{equation}
Furthermore, by \eqref{V_B_A=0}, we can find $T_1>0$ and $\varphi\in\textbf{AC}_{T_1}$ with $\varphi(0)=y$ and $\varphi(T_1)\in \mathcal{A}$ such that
\begin{equation}\label{attraloc_2}
	\mathcal{S}^y_{T_1}(\varphi)\leq \frac{\kappa_1}{2}.
\end{equation}
Now, by letting $T=\delta+T_1+T_0>0$, we define $\psi^z\in \textbf{AC}_T$ as
\begin{equation*}\label{psi^z}
	\psi^z(t)=\left\{
	\begin{array}{ll}
		{\rm LIF}_{z,y}(t), & t\in [0, |y-z|),\vspace{2mm} \\
		\varphi(t-|y-z|), & t\in [|y-z|, |y-z|+T_1), \vspace{2mm} \\
		\Phi_{t-|y-z|-T_1}(\varphi(T_1)), & t\in[|y-z|+T_1, T].
	\end{array}
	\right.	
\end{equation*}
Clearly, $T>T_0$, $\psi^z(0)=z$ and $\psi^z(T)\in\mathcal{A}$ (due to the invariance of $\mathcal{A}$); and moreover, \eqref{attraloc_1}-\eqref{attraloc_2} implies \eqref{attractup}.
Thus, we have proved the assertion.

Now, we are ready to prove \eqref{LDP_int_01}-\eqref{LDP_int_02}.
We first prove \eqref{LDP_int_01}. To this  purpose, for any $z\in\overline{B_\delta(y)}$, we choose $\psi^z$ in the assertion satisfying \eqref{attractup}.
Then, $\psi^z(T)\in\mathcal{A}$ and
\begin{equation}\label{G^z_include}
	\left\{\rho_T(X^{\varepsilon,z}_\cdot,\psi^z)<\eta \right\}\subset \left\{X^{\varepsilon,z}_T\in\mathcal{A}_{\eta}\right\}.
\end{equation}
Take $\delta$, $\gamma$, $s_0$ and $K$ as $\eta$, $\gamma$, $\kappa_1$ and $\overline{B_\delta(y)}$ in (\hyperref[Lower]{L}), respectively.
Given any $z\in \overline{B_\delta(y)}$, it follows from \eqref{ufldplbb}, \eqref{attractup} and \eqref{G^z_include} that there exists $\varepsilon_1>0$ such that
\begin{equation*}\label{LDP_int_1_1}
	\begin{split}
		\mathbb{P}\left\{X_T^{\varepsilon,z}\in\mathcal{A}_{\eta}\right\} \overset{\eqref{G^z_include}}{\geq}&\mathbb{P}\left\{\rho_T(X^{\varepsilon,z}_\cdot,\psi^z)<\eta\right\}\\
		\overset{\eqref{ufldplbb}}{\geq}&\exp\left\{-\frac{\mathcal{S}^z_T(\psi^z)+\gamma}{\varepsilon^2}\right\}
		\overset{\eqref{attractup}}{\geq} \exp\left\{-\frac{\kappa_1+\gamma}{\varepsilon^2}\right\},
	\end{split}
\end{equation*}
or equivalently,
\begin{equation*}
	\varepsilon^2\log \mathbb{P}\left\{X_T^{\varepsilon,z}\in\mathcal{A}_{\eta}\right\}\geq -\kappa_1-\gamma,
\end{equation*}
for any $0<\varepsilon<\varepsilon_1$.
Thus, we have proved \eqref{LDP_int_01}.

Finally, we prove \eqref{LDP_int_02}.
Given any $z\in \overline{\mathcal{A}_{\eta}}$, let
\begin{equation}\label{def_F_z}
	F^z = \{\varphi\in \textbf{AC}_T^z: \varphi(T)\notin  \mathcal{A}_{\eta}\}
\end{equation}
and
\begin{equation*}\label{def_G_z}
	G^z=\{\varphi^z\in\mathbf{AC}_T^z:\rho_T(\varphi^z,F^z)<\delta_2\}.
\end{equation*}
We \textit{claim that}
\begin{equation*}\label{inf_S_A_delta1}
	\inf_{z\in\overline{\mathcal{A}_{\eta}}}\inf_{\varphi^z\in G^z}\mathcal{S}^z_T(\varphi^z)\geq s_0.
\end{equation*}
In fact, for any $z\in \overline{\mathcal{A}_{\eta}}$ and $\varphi^z\in G^{z}$, choose a $\varphi^z_1\in F^{z}$ such that $\rho_T(\varphi^z_1,\varphi^z)<\delta_2$,
which implies that
\begin{equation}\label{phi_notin_Adelta1*}
	\varphi^z(T)\notin\mathcal{A}_{\delta_2},
\end{equation}
since $\eta=2\delta_2$ and $\varphi^z_1(T)\notin \mathcal{A}_{\eta}$.
Without loss of generality, we assume that $\mathcal{S}^z_T(\varphi^z)<s_2$ (Otherwise, $\mathcal{S}^z_T(\varphi^z)\geq s_2(\geq s_0)$, which yields the claim directly).
Since $T>T_0$, one has $\mathcal{S}^z_{T_0}(\varphi^z)\leq \mathcal{S}^z_T(\varphi^z)<s_2$.
Together with \eqref{def_H}-\eqref{def_s_2}, we obtain
\begin{equation*}\label{phi_in_Adelta2}
	\varphi^z(T_0)\in\mathcal{A}_{\delta_1}.
\end{equation*}
Hence, by noticing that $\overline{\mathcal{A}_{\delta_1}}\subset\mathcal{A}_{\delta_2}$ and \eqref{phi_notin_Adelta1*}, it entails that there exists $t_1\in(T_0,T]$ such that $\varphi^z(t_1)\in\partial\mathcal{A}_{\delta_2}$.
Then the definition of quasipotential $V$ and \eqref{def_s_1} implies that $$\mathcal{S}^z_T(\varphi^z)\geq V(\overline{\mathcal{A}_{\delta_1}},\partial\mathcal{A}_{\delta_2})=s_1(\geq s_0),$$
which completes the claim.

Now, let $\kappa_2=\frac{3}{4}s_0>0$. Then the claim yields that
\begin{equation*}
	\mathbb{F}_T^z(\kappa_2)\cap G^z=\emptyset\quad\text{ for any } z\in \overline{\mathcal{A}_{\eta}}.
\end{equation*}
In other words,
\begin{equation*}
	\mathbb{F}_T^z(\kappa_2)\cap\left\{\varphi\in\mathbf{C}_T^z:\rho_T(\varphi,F^z)<\delta_2\right\}=\emptyset,
\end{equation*}
which implies that
\begin{equation}\label{F_belong_F0.9}
	F^z\subset\{\varphi\in\textbf{C}_T^z: \rho_{T}(\varphi,\mathbb{F}_{T}^{z}(\kappa_2))\geq\delta_2\}.
\end{equation}
Take $\delta$, $\gamma$, $s_0$ and $K$ as $\delta_2$, $\gamma$, $\kappa_2$ and $\overline{\mathcal{A}_{\eta}}$ in (\hyperref[Upper]{U}), respectively.
Given any $z\in\overline{\mathcal{A}_{\eta}}$, together with \eqref{def_F_z} and \eqref{F_belong_F0.9}, one can find an $\varepsilon_2>0$ such that
\begin{align*}
	\mathbb{P}\left\{X_T^{\varepsilon,z}\notin\mathcal{A}_{\eta}\right\} \overset{\eqref{def_F_z}}{=}&\mathbb{P}\left\{X^{\varepsilon,z}_\cdot\in F^z\right\}\\
	\overset{\eqref{F_belong_F0.9}}{\leq}&\mathbb{P}\left\{\rho_{T}(X^{\varepsilon,z}_\cdot,\mathbb{F}_{T}^{z}(\kappa_2))\geq\delta_2\right\}
	\overset{\eqref{ufldpupbb}}{\leq}\exp\left\{-\frac{\kappa_2-\gamma}{\varepsilon^2}\right\};
\end{align*}
and hence,
\begin{equation*}
	\varepsilon^2\log\mathbb{P}\left\{X_T^{\varepsilon,z}\notin\mathcal{A}_{\eta}\right\}\leq -\kappa_2+\gamma,
\end{equation*}
for any $0<\varepsilon<\varepsilon_2$. Thus, we have proved \eqref{LDP_int_02}, which completes the proof.
\end{proof}

\vspace{2ex}

Now, we are ready to prove our main results.

\begin{proof}[Proof of Theorem \ref{Mthm}]
Let $\mu$ be a zero-noise limit of the system \eqref{itodff} (the assumption (\hyperref[H-2]{H2}) ensures the existence of $\mu$), and $K$ be any connected component of ${\rm supp}(\mu)$.
Then $K$ is a chain-transitive set. By virtue of Lemma \ref{Hirsch_99},
one of the following three alternatives must occur:\par
(i) $K$ is unordered; or otherwise,\par
(ii) $K$ is a stationary $p$-arc with one unstable endpoint;\par
(iii) $K$ is a stationary $p$-arc of stable equilibria.\\
We will prove, in any case of (i) or (ii), $\mu(K)=0$.
To this purpose, fix any $y\in K$, Lemma \ref{key-lemma-LDP} implies that for case (i) and (ii),
there exists an attractor $\mathcal{A}$,
and numbers $\kappa_2>\kappa_1>0$ and $T,\delta,\eta>0$ such that \eqref{LDP_int_01}-\eqref{LDP_int_02} holds.

Denote
\begin{equation*}
I_1=\int_{B_{\delta}(y)}\mathbb{P}\left\{X_{T}^{\varepsilon,z}\in\mathcal{A}_{\eta}\right\}\mu^{\varepsilon}(dz)\quad \text{ and }\quad I_2=\int_{\mathcal{A}_{\eta}}\mathbb{P}\left\{X_T^{\varepsilon,z}\notin\mathcal{A}_{\eta}\right\}\mu^\varepsilon(dz),
\end{equation*}
where $\mu^\varepsilon$ is a stationary measure of \eqref{itodff}.
Consequently, by choosing $\gamma\in(0,\frac{\kappa_2-\kappa_1}{2})$, we have
\begin{equation}\label{LDP_int_1}
	\mu^{\varepsilon}(B_{\delta}(y))\leq I_1\exp\left\{\frac{\kappa_1+\gamma}{\varepsilon^2}\right\}\quad\text{ and }\quad \mu^\varepsilon(\mathcal{A}_{\eta})\geq I_2\exp\left\{\frac{\kappa_2-\gamma}{\varepsilon^2}\right\},
\end{equation}
for any small $\varepsilon>0$.
This implies that
\begin{equation}\label{LDP_int_3}
\begin{split}
I_1&\leq\int_{(\mathcal{A}_\eta)^c}\mathbb{P}\left\{X_{T}^{\varepsilon,z}\in\mathcal{A}_{\eta}\right\}\mu^{\varepsilon}(dz)\\
&= \left(\int_{\mathbb{R}^r}-\int_{\mathcal{A}_{\eta}}\right)\mathbb{P}\left\{X_{T}^{\varepsilon,z}\in\mathcal{A}_{\eta}\right\}\mu^{\varepsilon}(dz)\\
&=\int_{\mathcal{A}_{\eta}}\mathbb{P}\left\{X_{T}^{\varepsilon,z}\notin\mathcal{A}_{\eta}\right\}\mu^{\varepsilon}(dz)
=I_2,
\end{split}
\end{equation}
which means that $I_1\leq I_2$. It then follows from \eqref{LDP_int_1}-\eqref{LDP_int_3} that
\begin{align*}
\frac{\mu^\varepsilon(B_\delta(y))}{\mu^\varepsilon(\mathcal{A}_{\eta})}
\overset{\eqref{LDP_int_1}}{\leq} \frac{\exp\left\{\frac{\kappa_1+\gamma}{\varepsilon^2}\right\} I_1}{\exp\left\{\frac{\kappa_2-\gamma}{\varepsilon^2}\right\}I_2}
\overset{\eqref{LDP_int_3}}{\leq} \exp\left\{-\frac{(\kappa_2-\kappa_1)-2\gamma}{\varepsilon^2}\right\}.
\end{align*}
Since $\mu^\varepsilon(\mathcal{A}_\eta)\leq1$, we obtain
$$\mu^\varepsilon(B_\delta(y))\leq \exp\left\{-\frac{(\kappa_2-\kappa_1)-2\gamma}{\varepsilon^2}\right\}.$$
Recall that $\gamma<\frac{1}{2}(\kappa_2-\kappa_1)$, it entails that $$\mu(B_\delta(y))\leq\liminf_{\varepsilon\to0}\mu^\varepsilon\big(B_{\delta}(y)\big)=0.$$
Noticing $K\subset \bigcup_{y\in K}B_{\delta}(y)$ ($\delta$ may depend on $y$), it then follows from the compactness of $K$ and the subadditivity of $\mu$ that there exists a finite subcover $\{B_{\delta_i}(y_i)\}_{i=1}^{N}$ of $K$ such that $$\mu(K)\leq \sum_{i=1}^{N}\mu\big(B_{\delta_i}(y_i)\big)=0.$$
Thus, only the alternative (iii) holds. We have completed the proof.
\end{proof}

\section{Applications to Stochastic Biochemical Control Circuit}\label{app}

In this section, we will apply our main results to investigate the stochastic stability of a well-known biochemical control circuit model
\begin{eqnarray}\label{dbcc}
\left\{\begin{array}{ll}
	\dot{x_{1}}(t)=f(x_{r}(t))-\alpha_{1}x_{1}(t),\\
	\dot{x_{j}}(t)=x_{j-1}(t)-\alpha_{j}x_{j}(t),\quad 2\leq j\leq r,\\
\end{array}
\right.
\end{eqnarray}
where the parameters $\alpha_j (1\leq j\leq r)$ are positive constants,
and the function $f:\mathbb{R}\to\mathbb{R}$ is a smooth function satisfying $f'\geq0$.
Following \cite{Selgrade2,Selgrade,Griffith}, we refer to system \eqref{dbcc} as a \textit{positive feedback system}. For such systems with $r = 2$ or $r = 3$, Selgrade \cite{Selgrade2,Selgrade} demonstrated that every positive-time trajectory of system \eqref{dbcc} converges to an equilibrium under mild additional restrictions. Jiang \cite{Jiang3, Jiang4} introduced a new criterion for the convergence of solutions of \eqref{dbcc} with $r = 3$ and $r = 4$. However, in $5$ dimension, system \eqref{dbcc} can exhibit more complicated behaviors, such as Hopf bifurcation (see more details in \cite{Selgrade1}).

Now, we consider the perturbation of the deterministic system \eqref{dbcc} as
\begin{eqnarray}\label{sbcc}
	\left\{\begin{array}{ll}
		dx_{1}=(f(x_{r})-\alpha_{1}x_{1})dt+\varepsilon \sigma_1(x_1)dW_t^1,\\
		dx_{j}=(x_{j-1}-\alpha_{j}x_{j})dt +\varepsilon \sigma_j(x_j)dW_t^j,\quad 2\leq j\leq r,
	\end{array}
	\right.
\end{eqnarray}
where the perturbation parameter $\varepsilon>0$ is small, and $W_t=(W^{1}_t,\cdots,W^{r}_t)^T$ is a standard $r$-dimensional Wiener process, and the diffusion function $\sigma_j(x_j)$ is locally Lipschitz continuous with respect to $x_j$ for $j=1,\cdots,r$.

\vspace{1.5ex}
In the following, we focus on the Griffith-Type, where $f$ satisfies
\begin{flalign}\label{G-m}
&{\rm\bf(G)}\quad\quad\quad\ \ \ f(z)={\rm sgn}(z)\frac{|z|^m}{1+|z|^m}\quad \text{ for any } z\in\mathbb{R},\text{ with the parameter }m\geq1.\tag*{}&
\end{flalign}

\begin{prop}\label{G}
Assume that system \eqref{dbcc} satisfies {\rm(\hyperref[G-m]{G})}. Assume also that there exists $c>0$ such that
\begin{equation}\label{sigma-leq-c}
	0<\sigma_j^2(z)\le c(z^2+1)\quad\text{ for any }z\in \mathbb{R}\text{ and }1\leq j\leq r.
\end{equation}
Let $\mu$ be a zero-noise limit of \eqref{sbcc}. Then
\begin{equation*}
\mu=\sum_{i=1}^n\lambda_i\delta_{E_i}(\cdot)\quad\text{ with }\sum_{i=1}^n\lambda_i=1,
\end{equation*}
where $E_i$, $1\leq i\leq n$, are asymptotically stable equilibria of {\rm(\ref{dbcc})}.
\end{prop}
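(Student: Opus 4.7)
The plan is to verify hypotheses (H1)--(H3) of Theorem \ref{Mthm} for the stochastic Griffith-type positive feedback system \eqref{sbcc}, and then to invoke Corollary \ref{Mthm-c} after noting that the Griffith nonlinearity $f$ is real-analytic on the positively invariant absorbing region where every chain-transitive set of \eqref{dbcc} lies.

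For (H1), the Jacobian $Db(x)$ has off-diagonal entries $\partial b_j/\partial x_{j-1}=1$ for $2\le j\le r$ and $\partial b_1/\partial x_r=f'(x_r)\ge 0$, with all other off-diagonal entries vanishing, so \eqref{dbcc} is cooperative; the dependency digraph is the single cycle $1\to 2\to\cdots\to r\to 1$, which is strongly connected on the dense open set where $f'(x_r)>0$, delivering irreducibility. Dissipativity follows from $|f|\le 1$: the bound $\dot x_1\le 1-\alpha_1 x_1$ yields a uniform forward bound on $x_1$, and the cascade $\dot x_j=x_{j-1}-\alpha_j x_j$ propagates boundedness to $x_2,\dots,x_r$, producing a bounded absorbing set.

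For (H2) and (H3), I would use a common quadratic candidate $V(x)=\tfrac12\sum_{j=1}^r c_j x_j^2$, with weights $c_j>0$ chosen inductively (for example via Young's inequality with splitting parameter $\epsilon_j=\alpha_j$, which forces $c_{j+1}<c_j\alpha_j\alpha_{j+1}$) so that the cross-terms $c_j x_j x_{j-1}$ are absorbed by the negative diagonal $-\sum c_j\alpha_j x_j^2$; together with $|f|\le 1$, this yields $\langle b,\nabla V\rangle\le -\gamma_0|x|^2+C_0$ for some $\gamma_0,C_0>0$. Under \eqref{sigma-leq-c}, the It\^o term satisfies $\mathrm{Tr}(\sigma^T D^2 V\sigma)=\sum_j c_j\sigma_j^2(x_j)\le c_*(|x|^2+1)$, so for $\varepsilon$ small it is dominated by the drift and \eqref{Vdissip} holds outside a large ball, giving (H2) with $V_1:=V$. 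Setting $V_2:=V+1$ handles (H3): \eqref{21} is evident; \eqref{23} is immediate from the nonnegativity of $D^2V_2$ together with $\sigma_j^2>0$; and \eqref{22} reduces to controlling $|\sigma^T\nabla V_2|^2=\sum_j c_j^2 x_j^2\sigma_j^2(x_j)=O(|x|^4+1)$ (via \eqref{sigma-leq-c}), which divided by $\eta V_2\asymp 1+|x|^2$ is $O(V_2)$, so the full left-hand side of \eqref{22} is bounded by $C(1+V_2)$ for appropriate $\theta,\eta,C$.

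The main technical point is the coupled tuning in (H3): the drift dissipation $\sim -\gamma_0|x|^2$ must simultaneously absorb the diffusive term $\tfrac{\theta}{2}\mathrm{Tr}(\sigma^T D^2V_2\sigma)\sim C_2\theta|x|^2$ and the nonlinear noise contribution $|\sigma^T\nabla V_2|^2/(\eta V_2)\sim C_3|x|^2/\eta$ while matching the target growth $C(1+V_2)$ on the right, and it is exactly here that the linear-growth bound \eqref{sigma-leq-c} on $\sigma_j^2$ is essential and cannot be relaxed. Once (H1)--(H3) are verified, Theorem \ref{Mthm} applies to \eqref{sbcc}; and since $f$ is real-analytic on the positive orthant, which is positively invariant under \eqref{dbcc} and captures the support of any zero-noise limit, the drift is analytic there and Corollary \ref{Mthm-c} produces the convex-combination representation $\mu=\sum_{i=1}^n\lambda_i\delta_{E_i}$ on asymptotically stable equilibria.
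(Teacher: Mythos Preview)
Your verification of (H1)--(H3) follows the same overall strategy as the paper --- a quadratic Lyapunov function controlling both drift and diffusion --- though the paper takes $V(x)=x^{T}Bx$ with $B$ the positive-definite solution of the Lyapunov equation $A^{T}B+BA=-I_r$ (where $A$ is the linear part of \eqref{dbcc}), which gives $\langle b,\nabla V\rangle\le -|x|^2+2\sum_i|b_{1i}x_i|$ in one line and avoids the recursive tuning of weights $c_j$. Your diagonal choice also works; either route is acceptable.

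The genuine gap is in your final step. The Griffith nonlinearity $f(z)=\mathrm{sgn}(z)\,|z|^m/(1+|z|^m)$ is \emph{not} real-analytic at $z=0$: for $m=1$ one has $f''(0^{+})=-2\ne 2=f''(0^{-})$, and for $m>1$ the denominator $1+|z|^m$ is not analytic through the origin. So Corollary~\ref{Mthm-c} cannot be invoked via analyticity of the drift on $\mathbb{R}^r$. Your proposed fix --- restrict to the positive orthant where $f$ \emph{is} analytic --- does not rescue the argument either: while $\mathbb{R}^r_+$ is positively invariant for the deterministic flow, the stochastic system \eqref{sbcc} and its stationary measures live on all of $\mathbb{R}^r$, and by the odd symmetry of $f$ there are asymptotically stable equilibria in $-\mathrm{int}\,\mathbb{R}^r_+$ that do lie in $\mathrm{supp}(\mu)$ (the paper's own classification produces mass at $-h^{-1}(\phi)\mathcal{V}_0$ and $-z_2\mathcal{V}_0$). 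Hence the claim that the positive orthant ``captures the support of any zero-noise limit'' is false.

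The paper closes the argument without analyticity. The equilibria of \eqref{dbcc} are $O$ together with the points $\pm z\mathcal{V}_0$ with $z>0$ solving $z^{m-1}/(1+z^m)=\phi$; this scalar equation has at most two positive roots, so $\mathcal{E}$ is finite. With finitely many equilibria no nontrivial stationary $p$-arc exists, and Theorem~\ref{Mthm} (cf.\ Remark~3.2) already forces each connected component of $\mathrm{supp}(\mu)$ to be a single stable equilibrium, giving the finite Dirac-combination form directly. Replace your analyticity paragraph with this finiteness observation and the proof goes through.
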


\begin{proof}
Due to the form of system \eqref{dbcc}, we write the matrix
\begin{equation*}
A=
\begin{pmatrix}
-\alpha_1 & & & 0 \\
1 & -\alpha_2 & \\
 & \ddots & \ddots \\
 & & 1 & -\alpha_r
\end{pmatrix}
\end{equation*}
for brevity.
Then all the eigenvalues of $A$ are negative. By the well-known Lyapunov theorem (e.g. see \cite[Section 6.4.2 on p.249]{Ortega}), there is an $(r\times r)$ positive definite matrix $B$ such that
$$A^TB+BA=-I_r,$$
where $I_r$ is the $(r\times r)$ identity matrix.
Define the positive definite Lyapunov function $$V(x)=x^TBx.$$
Clearly, one has $\nabla V(x)=2Bx$ and $D^2 V(x)=2B$.
Then, together with $|f(x_r)|=\frac{|x_r|^m}{1+|x_r|^m}$, we have
\begin{align}\label{b-V-1}
	\langle b(x),\nabla V(x)\rangle
	=-|x|^2 +f(x_r)\frac{\partial V}{\partial x_1}
	\leq -|x|^2+2\sum_{i=1}^r|b_{1i}x_i|.
\end{align}
By letting $R>0$ sufficiently large, we obtain that system \eqref{dbcc} is dissipative.

To verify the assumption (\hyperref[H-1]{H1}), we note that the Jacobian matrix of system \eqref{dbcc}
\begin{equation*}
	J(x) =
	\begin{pmatrix}
		-\alpha_1 & & & f'(x_r) \\
		1 & -\alpha_2 & \\
		& \ddots & \ddots \\
		& & 1 & -\alpha_r
	\end{pmatrix}.
\end{equation*}
Since $f'\geq0$ in (\hyperref[G-m]{G}), it is clear that system \eqref{dbcc} is cooperative.
As for the irreducibility, we consider the case (i) $m=1$ and case (ii) $m>1$, respectively.

Case (i): $m=1$. In this case, one has \( f'(z) = \frac{1}{(1+|z|)^2} > 0 \), which implies that $J(x)$ is irreducible for any $x\in\mathbb{R}^r$. Therefore, the solution flow of \eqref{dbcc} is strongly monotone (see \cite[Theorem 4.4.1]{Smith}).

Case (ii): $m>1$. In this case, \( f'(z) = \frac{m|z|^{m-1}}{(1+|z|^m)^2} \geq 0 \) holds for all \( z \in \mathbb{R} \).
Hence, \textit{$J(x)$ is only irreducible outside \(\{ x \in \mathbb{R}^r : x_r = 0 \}\)}.
In such situation, we will still obtain that
\begin{equation}\label{Jacobi-Phi-gg0}
	\frac{\partial \Phi_t}{\partial x}(x) \gg 0,\quad\text{ for any }x\neq0\text{ and }t>0.
\end{equation}
For this purpose, by repeating the same arguments in Smith \cite[Theorem 4.1.1]{Smith}, one only need to show that \textit{for any $x\neq0$ and any interval $(t_1,t_2)$, there exists $t_0\in(t_1,t_2)$ such that $J(\Phi_{t_0}(x))$ is irreducible} (see \cite[p.57, the paragraph below Theorem 4.1.1]{Smith}).
Suppose not, then
there exist \( x \neq 0 \) and \( 0 < t_1 < t_2 \) such that \( J(\Phi_t(x)) \) is reducible for any \( t \in (t_1, t_2) \). Equivalently, this means \( f'((\Phi_t(x))_r) = 0 \) (hence $(\Phi_t(x))_r = 0$), for all $t \in (t_1, t_2)$. Here, the subscript \( r \) denotes the \( r \)-th component. So, the time-derivative \( \frac{d}{dt}(\Phi_t(x))_r = 0 \) for all \( t \in (t_1, t_2) \). By the $r$-th equation of system \eqref{dbcc}, we have
\begin{equation*}
	(\Phi_t(x))_{r-1} = 0, \quad \text{for all } t \in (t_1, t_2).
\end{equation*}
By iterating the same arguments, we obtain from the form of system \eqref{dbcc} that \( (\Phi_t(x))_i = 0 \), for all \( t \in (t_1, t_2) \) and $i=1,2,\cdots,r-1$, which implies \( \Phi_t(x) = 0 \) for \( t \in (t_1, t_2) \). Since \( \{0\} \) is the unique equilibrium for system \eqref{dbcc}, it yields that $x=0$, a contradiction. Thus, we have proved \eqref{Jacobi-Phi-gg0}.

Consequently, \eqref{Jacobi-Phi-gg0} directly implies
\begin{equation*}
	\Phi_t(y) - \Phi_t(x) = \int_{0}^{1} \frac{\partial \Phi_t}{\partial x}\big(x + r(y - x)\big) \cdot (y - x) \, dr \gg 0,
\end{equation*}
whenever $x<y$ and $t>0$. Therefore, the solution flow \( \Phi_t \) is strongly monotone.

Next, we will show that $V$ satisfies both (\hyperref[H-2]{H2}) and (\hyperref[H-3]{H3}).

For (\hyperref[H-2]{H2}), it is clear $V$ satisfies \eqref{Vinfty}. Moreover, \eqref{sigma-leq-c}-\eqref{b-V-1} implies that
\begin{align*}
	\langle b(x),\nabla V(x)\rangle +\frac{\varepsilon^2}{2}{\rm Tr}\big(\sigma^T(x)D^2V(x)\sigma(x)\big)
	\leq  -|x|^2+2\sum_{i=1}^r|b_{1i}x_i|+c\varepsilon^2{\rm Tr}(B)\cdot(|x|^2+1).
\end{align*}
Then, by choosing $\varepsilon_0>0$ small and $R>0$ sufficiently large, if necessary, one obtains \eqref{Vdissip} with $\gamma>0$. This confirms (\hyperref[H-2]{H2}).

As for (\hyperref[H-3]{H3}), $V$ satisfies \eqref{21} clearly.
To verify \eqref{22}, we only need to estimate $\frac{|\sigma^T(x) \cdot \nabla V (x)|^2}{\eta V(x)}$, where $\eta$ will by determined later. To this purpose, let $\lambda_1>0$ be the least eigenvalue of $B$. Then
$$\frac{|\sigma^T(x) \cdot \nabla V (x)|^2}{\eta V(x)}\le \frac{ \|\sigma(x)\|^2\cdot|\nabla V(x)|^2}{\eta V(x)}\overset{\eqref{sigma-leq-c}}{\leq} \frac{2c(|x|^2+r)|Bx|^2}{\eta \lambda_1|x|^2}\le \frac{2c\|B\|^2(|x|^2+r)}{\eta \lambda_1},$$
by which we obtain
\begin{equation*}
	\begin{split}
		({\rm LHS}) &\leq -|x|^2 + 2\sum_{i=1}^r |b_{1i}x_i| + c\theta\mathrm{Tr}(B)(|x|^2 + 1) + \frac{2c\|B\|^2(|x|^2 + r)}{\eta \lambda_1} \\
		&= \underbrace{\left( -1 + c\theta\mathrm{Tr}(B) + \frac{2c\|B\|^2}{\eta\lambda_1} \right)}_{\Gamma_1}|x|^2 + 2\sum_{i=1}^r |b_{1i}x_i| + \left( c\theta\mathrm{Tr}(B) + \frac{2cr\|B\|^2}{\eta\lambda_1} \right),
	\end{split}
\end{equation*}
where {\rm (LHS)} means the left hand side of \eqref{22}. By selecting $\theta > 0$ sufficiently small and $\eta > 0$ sufficiently large to ensure $\Gamma_1 < 0$, and subsequently choosing $R > 0$ large enough such that $({\rm LHS}) < 0$ for $|x| > R$, we obtain
$$({\rm LHS}) \leq M \leq M(1 + V(x)) \quad \text{ for any } x \in \mathbb{R}^r,$$
where $M := \max_{|x| \leq R} ({\rm LHS})$. This deduces \eqref{22}.

Finally, since
$${\rm Tr}\big(\sigma^T(x)D^2V(x)\sigma(x)\big)\geq 2\sum_{i=1}^rb_{ii}\sigma_i^2(x)\ge 0,$$
which implies \eqref{23}.
Thus, $V$ satisfies (\hyperref[H-3]{H3}).

Therefore, the system \eqref{dbcc} satisfies (\hyperref[H-1]{H1})-(\hyperref[H-3]{H3}).
Clearly, there exists a finite number of equilibria in $\mathbb{R}^r$.
Then Theorem \ref{Mthm} (or Corollary \ref{Mthm-c}) directly implies Proposition \ref{G}.
\end{proof}

According to Proposition \ref{G}, in order to locate the support of
the zero-noise limits of \eqref{sbcc}, one needs to find out all the asymptotically stable equilibria of \eqref{dbcc}.
Denote by $\mathcal{E}_s$ the set of all asymptotically stable equilibria.
For Griffith-Type, $O$ is an equilibrium automatically.
To look for other non-zero equilibria, it suffices to look for the positive solution of
\begin{equation}\label{fun-h}
h(z)\triangleq\frac{f(z)}{z}=\phi,
\end{equation}
where $\phi=\prod_{i=1}^r\alpha_i$.
Let $\mathcal{V}_0=\big(\Pi_{i=2}^r\alpha_i,\Pi_{i=3}^r\alpha_i,\cdots,\alpha_r,1\big)^T\in\mathbb{R}^r$. Then any equilibrium possesses the form $z\mathcal{V}_0$ with a scalar $z$.
Recall that the Griffith-Type function $f(z)={\rm sgn}(z)\frac{|z|^m}{1+|z|^m}$ is odd.
Then, the set of equilibria
\begin{equation*}
\mathcal{E}=\{O\}\cup \left\{\pm z\mathcal{V}_0 \Big| \frac{z^{m-1}}{1+z^m}=\phi \text{ for }z> 0\right\},
\end{equation*}
where $``\pm"$ means plus-minus.
Moreover, we have
\begin{lem}\label{g-h-prime}
Assume {\rm(\hyperref[G-m]{G})} holds. Then, the equilibrium $z\mathcal{V}_0$ with $z>0$ is asymptotically stable if $h'(z) < 0$, and unstable if $h'(z) > 0$, where $h$ is defined in \eqref{fun-h}. Similarly, the equilibrium $z\mathcal{V}_0$ satisfying $z<0$ is asymptotically stable if $h'(z) > 0$, and unstable if $h'(z) < 0$.
\end{lem}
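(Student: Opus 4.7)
The plan is to analyze the stability of the equilibrium $E=z\mathcal{V}_0$ through its linearization. Because system \eqref{dbcc} is cooperative and irreducible (by \eqref{G-m}), the Jacobian $Db(E)$ is an irreducible Metzler matrix. The Perron--Frobenius theorem for such matrices then furnishes a simple real eigenvalue $\lambda_{\mathrm{PF}}$ whose real part strictly dominates the real parts of every other eigenvalue. Consequently, the asymptotic stability of $E$ is equivalent to $\lambda_{\mathrm{PF}}<0$, while $\lambda_{\mathrm{PF}}>0$ forces $E$ to be unstable, so the whole question reduces to determining the sign of $\lambda_{\mathrm{PF}}$.

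A direct cofactor expansion along the last column of $\lambda I-Db(E)$ (using that $Db(E)$ is lower bidiagonal except for the single entry $f'(z)$ in position $(1,r)$) yields
\begin{equation*}
P(\lambda)\triangleq\det(\lambda I-Db(E))=\prod_{i=1}^{r}(\lambda+\alpha_i)-f'(z).
\end{equation*}
In particular $P(0)=\phi-f'(z)$. Differentiating $h(z)=f(z)/z$ and using the equilibrium identity $h(z)=\phi$ from \eqref{fun-h} gives $z\,h'(z)=f'(z)-\phi$, and hence
\begin{equation*}
P(0)=-z\,h'(z).
\end{equation*}

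To read off the sign of $\lambda_{\mathrm{PF}}$ from that of $P(0)$, observe that $g(\lambda)\triangleq\prod_{i=1}^{r}(\lambda+\alpha_i)$ is strictly increasing on $[0,+\infty)$ with $g(0)=\phi>0$. Therefore: if $P(0)>0$, then $P(\lambda)>0$ for every $\lambda\ge 0$, so $P$ has no nonnegative real root and $\lambda_{\mathrm{PF}}<0$, giving asymptotic stability; if $P(0)<0$, then $P(0)<0<P(+\infty)$ yields a positive real root, and the Perron--Frobenius dominance forces $\lambda_{\mathrm{PF}}>0$, giving instability. Substituting $P(0)=-z\,h'(z)$ and splitting into the cases $z>0$ and $z<0$ translates these dichotomies into precisely the four statements of the lemma. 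The one non-routine ingredient is the Perron--Frobenius step, which is what lets us bypass any delicate location of complex roots of $P$; this is also the sole place where the cooperative and irreducible structure of \eqref{dbcc} is invoked.
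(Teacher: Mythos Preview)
Your proof is correct, but it takes a genuinely different route from the paper's. The paper does not linearize directly; instead it introduces $g(z)=f(z)-\phi z$, observes that $g'(z)=zh'(z)$, and then invokes two results of Selgrade on single-loop positive feedback systems: Corollary~5.5 of \cite{Selgrade} for asymptotic stability when $g'(z)<0$, and Theorem~6.1 of \cite{Selgrade} for instability when $g$ changes sign through zero at $z$ with $g'(z)>0$. Your argument, by contrast, computes the characteristic polynomial $P(\lambda)=\prod_i(\lambda+\alpha_i)-f'(z)$ explicitly and uses the Perron--Frobenius structure of the irreducible Metzler Jacobian to pin down the sign of the dominant eigenvalue from that of $P(0)=-zh'(z)$. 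Your approach is entirely self-contained (no appeal to \cite{Selgrade}) and arguably more transparent, since it reduces the question to a one-line monotonicity observation about $P$ on $[0,\infty)$. The paper's route, on the other hand, leans on the monotone feedback machinery already developed by Selgrade, which in principle can yield information beyond pure spectral data (for instance in borderline cases), though that extra strength is not needed here since the lemma only concerns the nondegenerate cases $h'(z)\neq 0$.
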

\begin{proof}
For the equilibrium $z\mathcal{V}_0$,
we hereafter only focus on the case that $z>0$.
One has $h(z)=\phi$; and moreover, $$h'(z)=\frac{zf'(z)-f(z)}{z{^2}}=\frac{f'(z)-\phi}{z}.$$
Now, define $g(z)\triangleq f(z)-\phi z$. We obtain $g'(z)=f'(z)-\phi$, hence, $$g'(z)=z h'(z).$$
If $h'(z)<0$, then $g'(z)<0$. It follows from \cite[Corollary 5.5]{Selgrade} that $z\mathcal{V}_0$ is asymptotically stable.
If $h'(z)>0$, we have $g'(z)>0$. Choose $\varepsilon>0$ so small that $g'(z_1)>0$ for any $z_1\in (z-\varepsilon,z+\varepsilon)$. This implies $g(z_1)<0$ for any $z_1\in (z-\varepsilon,z)$ and $g(z_1)>0$ for any $z_1\in(z,z+\varepsilon)$ (since $g(z)=0$).
By \cite[Theorem 6.1]{Selgrade}, $z\mathcal{V}_0$ is unstable.

Thus, we conclude the proof.
\end{proof}

Case (i): For $m=1$. Then $h(z)=\dfrac{1}{1+z}$ for $z>0$, whose graph is illustrated in \ref{Fig.4}.

Clearly, if $\phi\geq1$, then $O$ is the unique equilibrium which is stable. While, if
$ 0<\phi<1$, then $O$ is a saddle, $\pm h^{-1}(\phi)\mathcal{V}_0$ are asymptotically stable, since $h'\left(h^{-1}(\phi)\right)<0$ and $h'\left(-h^{-1}(\phi)\right)>0$ (see Lemma \ref{g-h-prime}).
Thus,
\begin{equation*}
\mathcal{E}_s=\left\{
\begin{array}{ll}
	\{O\}, & {\rm if}\ \phi\ge 1; \\
	\{\pm h^{-1}(\phi)\mathcal{V}_0\}, & {\rm if}\ 0<\phi<1;
\end{array}
\right.
\end{equation*}
and hence, any zero-noise limit $\mu$ in Proposition \ref{G} is
\begin{equation*}
\mu=\left\{
\begin{array}{ll}
	\delta_O(\cdot), & {\rm if}\ \phi\ge 1; \\
	\lambda_1\delta_{h^{-1}(\phi)\mathcal{V}_0}(\cdot)+\lambda_2\delta_{-h^{-1}(\phi)\mathcal{V}_0}(\cdot), & {\rm if}\ 0<\phi<1,
\end{array}
\right.
\end{equation*}
where $\lambda_1+\lambda_2=1$, with $\lambda_1,\lambda_2\geq0$.

\begin{figure}[H]
	\centering
	\subfigure[($m=1$).]{
		\label{Fig.4}
		\includegraphics[scale=0.6]{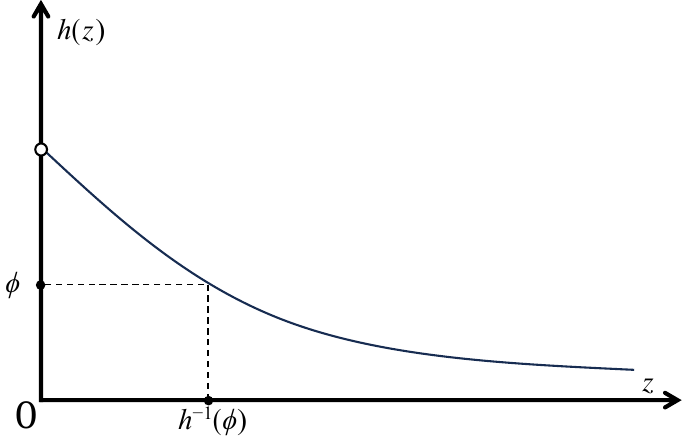}}
	\subfigure[($m>1$): $z_m$ is the unique maximal value point with $h(z_m)=\phi_m$; and $z_i$, $i=1,2$, are the pre-image of $\phi$.]{
		\label{Fig.5}
		\includegraphics[scale=0.6]{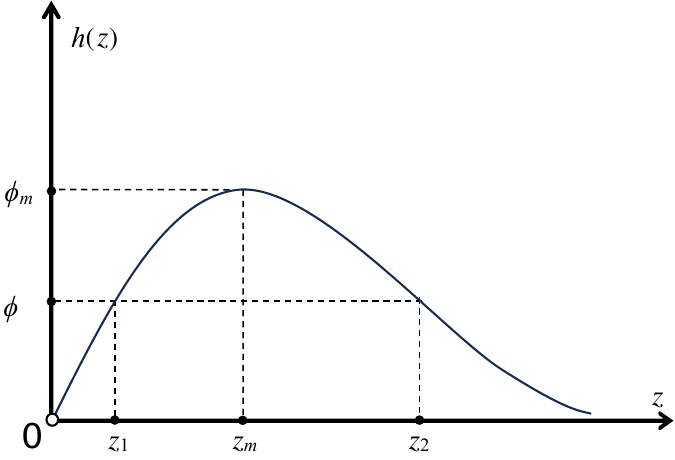}}
	\caption{The graph of $h(z)$ for $z\geq0$.}
	\label{Fig.main1}
\end{figure}

Case (ii): For $m>1$. $h(z)=\dfrac{z^{m-1}}{1+z^m}$ for $z\geq0$, whose graph is illustrated in \ref{Fig.5}.
It is not difficult to see that the corresponding set of equilibria $\mathcal{E}$ is as
\begin{equation*}
\mathcal{E}=\left\{
\begin{array}{ll}
	\{O\}, & {\rm if}\ \phi> \phi_m; \\
	\{O,\ \pm z_m\mathcal{V}_0 \}, & {\rm if}\ \phi=\phi_m;\\
	\{O,\ \pm z_1\mathcal{V}_0, \ \pm z_2\mathcal{V}_0 \}, & {\rm if}\ 0< \phi<\phi_m.
	
\end{array}
\right.
\end{equation*}

If $\phi>\phi_m$, then $\{O\}$ is the unique equilibrium which is global asymptotically stable, that is, a global attractor.
If $\phi=\phi_m$, then from \ref{Fig.5} it follows that $g(z)\triangleq f(z)-\phi_mz=z(h(z)-\phi_m)<0$, for any $z\neq z_m$. Therefore, by \cite[Lemma 5.1]{Selgrade}, $\Phi_t(z\mathcal{V}_0)$ decreasingly converges to $O$ as $t\rightarrow \infty$ whenever $z\in(0,z_m)$, and $\Phi_t(z\mathcal{V}_0)$ decreasingly converges to $z_m\mathcal{V}_0$ as $t\rightarrow \infty$ whenever $z\in (z_m,+\infty)$. Thus, the equilibrium $z_m\mathcal{V}_0$ is unstable. Similarly, we obtain that $-z_m\mathcal{V}_0$ is also unstable.

If $0< \phi<\phi_m$, then $h'(z_1)>0$ and $h'(z_2)<0$, which means that $\pm z_1\mathcal{V}_0$ are both unstable; and hence, $O$ is asymptotically stable (see \cite[Lemma 5.1]{Selgrade}). And also, $\pm z_2\mathcal{V}_0$ are both asymptotically stable. Consequently, we have
\begin{equation*}
	\mathcal{E}_s=\left\{
	\begin{array}{ll}
		\{O\}, & {\rm if}\ \phi\geq \phi_m; \\
		\{O,\ \pm z_2\mathcal{V}_0\}, & {\rm if}\ 0< \phi<\phi_m.
	\end{array}
	\right.
\end{equation*}
Together with Proposition \ref{G}, we obtain that the zero-noise limit
\begin{equation}
	\mu=\left\{
	\begin{array}{ll}
		\delta_O(\cdot), & {\rm if}\ \phi\ge \phi_m, \\
		\lambda_1\delta_O(\cdot)+\lambda_2\delta_{z_2\mathcal{V}_0}(\cdot)+\lambda_3\delta_{-z_2\mathcal{V}_0}(\cdot), & {\rm if}\ 0<\phi<\phi_m,\\
	\end{array}
	\right.
\end{equation}
where $\sum_{i=1}^{3}\lambda_i=1$ with $\lambda_i\geq0$, for $i=1,2,3$.

\vspace{1ex}
Due to our analysis, we present the classification of all zero-noise limits for stochastic Griffith-Type model in the Table \ref{table2}.

\begin{table}[h]
\centering
\caption{The classification of zero-noise limits for Griffith-Type Model (with $\phi=\Pi_{i=1}^r\alpha_i$).\label{table2}}
\begin{tabular}{|c|c|c|c|c|}
	\hline
	\multirow{2}{*}{Parameters}  & \multicolumn{2}{c|}{\multirow{1}{*}{$m=1$}} &
	\multicolumn{2}{c|}{\multirow{1}{*}{$m>1$}}\\
	\cline{2-5}
	\multirow{2}{*}{}& $\phi\ge1$ &  $0<\phi<1$ & $\phi\ge\phi_m$ & $0<\phi<\phi_m$ \\
	
	\hline
	\tabincell{c}{Limiting\\
		measures} & $\delta_{O}$ & $\lambda_1\delta_{h^{-1}(\phi)\mathcal{V}_0}+\lambda_2\delta_{-h^{-1}(\phi)\mathcal{V}_0}$ & $\delta_{O}$ & $\lambda_1\delta_{O}+\lambda_2\delta_{z_2\mathcal{V}_0}+\lambda_3\delta_{-z_2\mathcal{V}_0}$\\
	\hline
\end{tabular}
\end{table}

\vspace{4mm}

\noindent\textbf{Remark 6.1.} It deserves to point out that, for $m>1$ and $0<\phi<\phi_m$, system \eqref{dbcc} may admit a nontrivial periodic orbit (see Selgrade \cite{Selgrade1} for the 5-dimensional case) via the Hopf bifurcation. More precisely, let $r=5$ and $\alpha_i=\alpha$ for any $i=1,\cdots,5$. Then system \eqref{dbcc} admits a unique Hopf bifurcating point given by
$$\beta=\left(\frac{\eta^{m-1}}{1+\eta^m}\right)^{\frac{1}{5}}$$
where
\begin{equation*}
\eta=\left(m\cos^5\left(\frac{2\pi}{5}\right)-1\right)^{\frac{1}{m}}\quad\text{ with }\ m>\frac{1}{\cos^5\big(\frac{2\pi}{5}\big)}\thickapprox 305.
\end{equation*}
In fact, Selgrade \cite{Selgrade1} proved that there is a small $\gamma>0$ such that, for any $\alpha\in(\beta-\gamma,\beta)$, system \eqref{dbcc} admits two periodic orbits $\gamma^\pm$ lying on 4-dimensional Lipschitz nonmonotone manifolds $\mathcal{H}^\pm\subset \pm{\rm Int}\mathbb{R}^5_+$, respectively, where ${\rm Int}\mathbb{R}^5_+=\{x\in\mathbb{R}^5:x_i>0 \text{ for each }i=1,\cdots,5\}$.
Furthermore, $\pm z_1\mathcal{V}_0\in \mathcal{H}^\pm$. Thus, our Proposition \ref{G} excludes the concentration of zero-noise limit on periodic orbits.

\end{document}